\documentclass{article} 

\usepackage{graphicx} 
\usepackage[a4paper, total={6.5in, 9.5in}]{geometry} 
\usepackage{amsmath,amsthm,amssymb}
\usepackage{mathtools}
\usepackage[dvipsnames,table]{xcolor} 
\usepackage{setspace}
\usepackage[para,online,flushleft]{threeparttable}
 \usepackage{booktabs}
 \usepackage{natbib}
 \usepackage{subcaption}  
\usepackage{url}
\usepackage{pifont}
\usepackage{amsfonts}
\usepackage{enumitem}
\usepackage{siunitx}
\usepackage{tabularx}

\newtheorem{theorem}{Theorem}[section]

\theoremstyle{definition}
\newtheorem{example}{Example}

\usepackage{hyperref}
\hypersetup{
    colorlinks=true,
    citecolor=blue,
    linkcolor=blue
    }
\usepackage{cleveref} 

\usepackage{tikz}
\usetikzlibrary{backgrounds}
\usetikzlibrary{calc}
\usetikzlibrary {arrows.meta}
\usetikzlibrary{decorations.pathmorphing}
\usetikzlibrary{fit,tikzmark}
\usetikzlibrary{positioning}

\tikzset{
  default/.style={circle, draw, inner sep=1pt, minimum size=5mm, font=\small},
}

\definecolor{color1}{RGB}{228,26,28}    
\definecolor{color2}{RGB}{55,126,184}   
\definecolor{color3}{RGB}{77,175,74}    
\definecolor{color4}{RGB}{152,78,163}   
\definecolor{color5}{RGB}{255,127,0}    
\definecolor{color6}{RGB}{255,105,180} 
\definecolor{color7}{RGB}{166,86,40}    

\tikzset{
  edge1/.style={color1,thick,solid},
  edge2/.style={color2,thick,solid},
  edge3/.style={color3,thick,solid},
  edge4/.style={color4,thick,solid},
  edge5/.style={color5,thick,solid},
  edge6/.style={color6,thick,solid},
  edge7/.style={color7,thick,solid},
  fictive_edge/.style={thick, dotted},
  edge1blurred/.style={color1,thick,solid,opacity=0.3},
  edge2blurred/.style={color2,thick,solid,opacity=0.3},
  edge3blurred/.style={color3,thick,solid,opacity=0.3},
  edge4blurred/.style={color4,thick,solid,opacity=0.3},
  edge5blurred/.style={color5,thick,solid,opacity=0.3},
  edge6blurred/.style={color6,thick,solid,opacity=0.3},
  edge7blurred/.style={color7,thick,solid,opacity=0.3},
}

\definecolor{acadpurple}{RGB}{128,0,128}
\definecolor{acadcyan}{RGB}{0,150,200}
\definecolor{acadgreen}{RGB}{0,120,0}

\newcommand{\typeA}[1]{\fcolorbox{black}{acadpurple!20}{#1}}
\newcommand{\typeB}[1]{\fcolorbox{black}{acadcyan!60}{#1}}
\newcommand{\typeC}[1]{\fcolorbox{black}{acadgreen!40}{#1}}
\newcommand{\typeD}[1]{\fcolorbox{white}{white}{#1}}

\newenvironment{keywords}{
    \vspace{1ex} 
    \noindent\textbf{Keywords:} 
}{\par\vspace{1ex}} 

\usepackage[textwidth=2cm]{todonotes}

\begin{document}
    
\title{The incomplete Traveling Tournament Problem} 
\author{Karel Devriesere, David Van Bulck, Dries Goossens} 
\date{} 

\maketitle

\begin{abstract}
    We present a new problem called the incomplete Traveling Tournament problem, which introduces the well known Traveling Tournament Problem into the realm of incomplete round-robin tournaments. We focus on the case where teams can face each opponent at most once. We give a formal description of this problem and show that it is $\mathcal{NP}$-hard. We first discuss how we can obtain lower bounds and how to strengthen them. Then, we propose two integer programming formulations and compare their LP-relaxations. We also propose a third formulation that assumes that home-away patterns of teams are fixed. 
    We discuss how a recently proposed metaheuristic for incomplete round-robin scheduling can be tailored to our problem.
    In doing so, we present a novel neighborhood structure and show it fully connects the home-away pattern solution space.
    Finally, problem instances are proposed, for which we derive lower and upper bounds. We show that these instances are challenging, making the development of efficient algorithms for the incomplete Traveling Tournament problem an interesting direction for future research. 
\end{abstract}

\keywords{Traveling tournament problem, Round robin, Sports scheduling, Combinatorial optimization, Integer programming, Heuristics}

\section{Introduction}

In an incomplete round-robin tournament (iRR), the number of available rounds is too small for a team to play all its possible opponents. As a result, while all teams play the same number of games, only a subset of their potential games is scheduled, with the selection determined by the competition organizer (possibly by a draw).
Recently, the iRR tournament format has gained popularity, especially since the Union of European Football Associations (UEFA) decided to reform the group stages of their most prestigious competitions (Champions league, Europa League, and the Conference League) to an iRR (see e.g.\ \citet{Guyon2025}, \citet{CsatoDevriesere2025}). In this case, each of the 36 teams faces just eight (or six in case of the Conference League) of their 35 opponents in the league. 
Also in the United States, competitions organized into conferences and/or divisions often give rise to incomplete round-robin (iRR) tournaments. For instance, in Major League Soccer, teams face only six of the fifteen possible opponents from the opposite conference, with the selected games announced prior to the season as part of the official timetable. 
The iRR format is nowadays also used in youth sports competitions where there are tens if not hundreds of teams within the same age or strength category that form potential opponents. While a classic round-robin (RR) tournament would simply take too long to play all games, the incomplete round-robin (iRR) format is an appealing alternative (see also \citet{li2025beyond,devriesere2025redesigning}).

One of the most prominent objectives studied in sports scheduling is the minimization of the total distance traveled by all teams \citep{ribeiro2012sports}.
We distinguish two cases.
In the first case, particularly popular in European competitions, teams return home after each away game.
Consequently, in a classic RR, the travel distance is fully determined once the participating teams in the tournament have been decided (see also \citet{Toffolo2017}) and the schedule has no impact on the total distance traveled (unless the scope is limited to a subset of the rounds like holiday periods; see e.g.\ \citet{Kendall2008}).
More interesting, though, is the iRR case, where the schedule can significantly impact the distance traveled trough careful determination of match pairings. 
This setting has been intensively studied by \citet{li2025beyond} and \citet{devriesere2025redesigning} for the case of Belgian youth football and field hockey, respectively.
As shown by \citet{li2025beyond}, the iRR format can substantially reduce total travel distance compared to a format in which teams are partitioned into leagues, each of which is organized as a RR.
In the second case, teams travel directly between their opponents’ venues when consecutively playing away, forming a so-called \emph{road trip}.
In the RR format, the problem of minimizing travel distance is known as the Traveling Tournament Problem (TTP) and has been studied extensively in the literature (see, e.g.\ \citet{easton2001traveling,zhao2025matching,uthus2012solving,miyashiro2012approximation,goerigk2014solving}).

This paper starts by generalizing the TTP such that it accommodates a broad range of real-life tournament formats, including the iRR. In the \emph{Generalized Traveling Tournament Problem} (GTTP), matches are selected from a predefined set of possible games and assigned to a set of rounds such that each team plays the same number of games (each team plays once per round). For every team, the number of home and away games must be as balanced as possible. In addition, constraints limit how frequently any pair of teams may face each other, as well as the maximum number of consecutive home or away games allowed for each team. The objective is to minimize the total travel distance, assuming road trips. 
A formal description of the Generalized TTP (GTTP) is given in \Cref{sec:GTTP}.

Then, we focus on a special case of GTTP, assuming there are not enough rounds to play all games.
Hence, in this case, travel distances are influenced not only by effectively grouping away games into road trips, but also by the choice of matches. 
We coin this special case the \emph{incomplete Traveling Tournament Problem} (iTTP). 
In particular, we assume that the number of rounds is even and that every team can face every other team, but at most once over the tournament. 
Both assumptions are common in sports scheduling and result from fairness considerations. Indeed, an odd number of rounds implies that some teams have more home games than others, offering them an unfair advantage (this has been shown for iRR chess tournaments by \citet{csato2024most}). Moreover, allowing a team to face some opponents multiple times while not all opponents have been played would exacerbate imbalances in the strength of opponents across teams.
Like the TTP, the proposed iTTP is a judicious simplification of real-world scheduling problems, which typically involve dozens, if not hundreds, of additional constraints (see \citet{VanBulck2019}). It focuses on the most fundamental elements of the problem, while still capturing the sources of its complexity. The iTTP is formally introduced in \Cref{sec:iTTP}, together with computational complexity results. 

In \Cref{sec:LowerBounds}, we develop lower bounds for iTTP, discuss ways to strengthen them, and contrast them with TTP lower bounds from the literature. In \Cref{sec:UpperBounds}, we develop solution strategies that build directly on the literature, including integer programming formulations, together with a heuristic. We first show that recently proposed neighborhood structures for modifying home-away patterns are not connected. Then, we propose a new neighborhood structure, which involves swapping the home-away status of two teams in two rounds. We show that this neighborhood is connected. In \Cref{sec:Results}, we test the performance of the bounds and solution strategies on a problem instance set derived from the TTP literature. It turns out that iTTP remains challenging to solve, and solution strategies for solving TTP or constructing iRRs are not necessarily applicable or suitable for iTTP. 

\section{Problem description}\label{sec:ProblemDescription}

We first describe the Generalized Traveling Tournament Problem (\Cref{sec:GTTP}), and then formally introduce the Incomplete Traveling Tournament Problem (\Cref{sec:iTTP}).

\subsection{Generalized Traveling Tournament Problem}\label{sec:GTTP}

Here, we give a formal description of GTTP, which generalizes the well known TTP.
Given is a set of teams $T =\{ 1,\dots,n \}$ and a set of rounds $R = \{ 1,\dots,r \}$. Each team has a home venue, and a game is played at the venue of one of the opposing teams. The team that plays home is called the home team while the visiting team is called the away team. Also given is a set $M$ of tuples such that $(i,j)$ represents the match between teams $i$ and $j$, $i,j \in T: i \neq j$, with $i$ being the home team. The set $M$ represents the set of matches that the competition organizer can choose from. We assume that $n$ is even such that each team plays exactly one game in each round. To fairly balance the home advantage (see e.g.\ \citet{Pollard2005,Pollard2017}), it is required that each team plays an equal number of home and away games. Thus, $r$ is also assumed to be even. For every distinct pair of teams $i$ and $j$, we are given a parameter $\alpha_{ij}$ that denotes the minimum number of times that $i$ and $j$ must face each other, and a parameter $\beta_{ij}$ that denotes the maximum number of times that $i$ and $j$ can face each other (with $\alpha_{ij}=\alpha_{ji}$ and $\beta_{ij} = \beta_{ji}$). Moreover, we are given a parameter $\gamma$ which states the number of consecutive slots in which teams are allowed to face each other at most once. 

At the beginning of the tournament, all teams are at their home venue, to which they must return after their last away game. Given is a matrix $(d_{i,j})_{i,j \in T}$, such that $d_{i,j}$ is the distance between teams $i$ and $j$, with $d_{i,i} = 0$. 
A team undertakes a road trip whenever it plays one or more consecutive away games: the team departs from its home venue before the first away game in the sequence, travels directly between the venues of successive opponents, and returns to its home venue only after the final away game. To formalize travel, we define a leg of a road trip as a single direct journey between two venues. For example, if a team $l$ has three consecutive away games against $i,j$ and $k$, the total travel distance of this road trip is $d_{l,i}+d_{i,j}+d_{j,k}+d_{k,l}$, and the trip consists of 4 legs. Also given is a parameter $\lambda$ that specifies the maximum permitted length of a road trip or home stand, measured as the number of consecutive away (respectively, home) games. Consequently, a road trip will consist of at most $\lambda + 1$ legs. The objective is to minimize the overall travel distance of all road trips. Hence, this problem can formally be defined as follows:

\begin{description}[align=left]
    \item[\textbf{Generalized Traveling Tournament Problem (GTTP)}]
    \item[\textbf{Input:}] A set of teams $T=\{1,\dots,n\}$ with n even, a set of rounds $R=\{1,\dots,r\}$, with $r$ even, a set of matches $M$, a distance matrix $(d_{ij})_{i,j \in T}$, non-negative integers $\alpha_{ij}, \beta_{ij}, i,j \in T$, and integers $\lambda, \gamma \geq 1$.

    \item[\textbf{Output:}] A timetable minimizing the sum of distances traveled by all teams such that:
    \begin{description}
        \item[\textbf{C1}] A team has at most $\lambda$ consecutive home and $\lambda$ consecutive away games,
        \item[\textbf{C2}] A team has exactly one game per round,
        \item[\textbf{C3}] A team has exactly $\frac{r}{2}$ home and $\frac{r}{2}$ away games,
        \item[\textbf{C4}] Two teams $i$ and $j$ face each other at least $\alpha_{ij}$ and at most $\beta_{ij}$ times,
        \item[\textbf{C5}] Only matches from the set $M$ are included in the timetable,
        \item[\textbf{C6}] Two teams face each other at most once in any $\gamma$ consecutive rounds.
    \end{description}
\end{description} 

This problem unifies several proposed TTP variants. In the classic TTP, a double round-robin tournament should be constructed, so $r=2(n-1)$, $M = \{(i,j), i,j \in T: i \neq j\}$, and $\alpha_{ij}=\beta_{ij}=2$ for every pair $i,j \in T$. Typically, $\gamma=2$ and $\lambda=3$.
\citet{ribeiro2007heuristics} consider the mirrored TTP (mTTP), in which a team faces every other team once at home or away in the first $n-1$ rounds. The remaining rounds are an exact copy of the first half except that the home-away status of games is reversed. Therefore, $r=2(n-1), \alpha_{ij}=\beta_{ij}=2$ for every pair $i,j \in T$, and $\gamma=n-1$. In the TTP with predefined venues, proposed by \citet{melo2009traveling}, the location of each game is specified and a single round-robin tournament should be constructed (so $\alpha_{ij}=\beta_{ij}=1$). Hence, $M$ consists only of tuples $(i,j)$ if the match between $i$ and $j$ is played at the venue of $i$; for every two teams $i$ and $j$, either $(i,j)$ or $(j,i)$ is in $M$. The bipartite TTP consists of two leagues where only inter-league matches take place \citep{hoshino2011inter}, thus $M$ consists only of inter-league matches. 

Note that since the classic TTP was shown to be $\mathcal{NP}$-hard \citep{bhattacharyya2009note}, being a generalization, GTTP is clearly also $\mathcal{NP}$-hard.

\subsection{Incomplete Traveling Tournament Problem}\label{sec:iTTP}

In the incomplete Traveling Tournament Problem (iTTP), each team may face any other team at most once but there are insufficient rounds for every team to play against all others.
In other words, $M = \{(i,j): i,j \in T: i \neq j\}$, $r \leq n-2$ ($r$ is even), $\alpha_{ij} = 0$ and $\beta_{ij} = 1$, for every pair $i,j \in T$. Note that since $M$ contains all matches and teams meet each other at most once, Constraints \textbf{C5} and \textbf{C6} are always satisfied (and parameter $\gamma$ is redundant). The problem can formally be defined as follows:

\begin{description}[align=left]
    \item[\textbf{Incomplete Traveling Tournament Problem (iTTP)}]
    \item[\textbf{Input:}] A set of teams $T=\{1,\dots,n\}$ with n even, a set of rounds $R=\{1,\dots,r\}$ with $r$ even and $r \leq n-2$, a distance matrix $(d_{ij})_{i,j \in T}$, $\alpha_{ij} = 0$ and $\beta_{ij} = 1$ for all $i,j \in T: i \neq j$, and an integer $\lambda \geq 1$.

    \item[\textbf{Output:}] A timetable minimizing the sum of distances traveled by all teams, such that Constraints \textbf{C1}-\textbf{C4} are satisfied.
\end{description} 

A timetable where Constraints \textbf{C1}-\textbf{C4} are satisfied is called feasible. We now give the complexity status of iTTP. Since $r$ is even, the smallest possible tournament contains only two rounds. We show that even for this case, iTTP is $\mathcal{NP}$-hard.

\begin{theorem}
\label{thm:complexity}
The iTTP is $\mathcal{NP}$-hard, even if $r=2$.
\end{theorem}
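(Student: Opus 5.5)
The plan is to first turn the $r=2$ case into a purely graph-theoretic covering problem, and then reduce a known $\mathcal{NP}$-complete cycle-partition problem to it.

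\emph{Structure of feasible timetables for $r=2$.} By \textbf{C3}, every team plays exactly one home game and one away game, so its home-away pattern is HA or AH. With $\lambda\ge 1$, constraint \textbf{C1} is then automatically satisfied. A team that plays away at $j$ makes the single road trip $i\to j\to i$ and travels $2d_{ij}$, so the total distance of a timetable is $\sum_{(j,i)} 2d_{ij}$ over all scheduled matches $(j,i)$, where $j$ is the home team. Orient each match from the away team to the home team. Every team then has in-degree and out-degree one, so the matches form a spanning collection of directed cycles. Because $\beta_{ij}=1$, no pair meets twice, which rules out 2-cycles, so every cycle has length at least $3$. Two consecutive arcs of such a cycle share a team and must therefore lie in different rounds, which forces every cycle to have even length. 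Conversely, given any partition of $T$ into even cycles of length at least $4$, we can orient each cycle and assign its arcs alternately to rounds $1$ and $2$; this yields a feasible timetable with the stated cost. Hence, for $r=2$, iTTP is exactly the problem of finding a minimum-weight vertex partition of the complete graph on $T$, with edge weights $2d_{ij}$, into even cycles of length at least $4$.

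\emph{Hardness.} Next I will reduce a decision problem on restricted 2-factors to this problem. Given a graph $G$, set $d_{ij}=1$ if $ij\in E(G)$ and $d_{ij}=2$ otherwise; this is even a metric. Then $G$ admits a partition into even cycles of length at least $4$ using only edges of $G$ if and only if the iTTP instance has optimal value $2n$. It remains to choose a source problem for which this partition question is $\mathcal{NP}$-complete. The first candidate I would try is partition into cycles of length exactly $4$, which is $\mathcal{NP}$-complete by Kirkpatrick and Hell. With this distance function, however, $6$-cycles and longer even cycles are as cheap as $4$-cycles, so I would refine the instance. One option is to work in a graph with girth and parity properties such that the only even cycles are $4$-cycles. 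The other is to replace the source problem by the known $\mathcal{NP}$-hard problem of finding a 2-factor of a bipartite graph that avoids $4$- and $6$-cycles. In that case, bipartiteness makes every cycle even automatically, and the restriction on length must be enforced by the construction. A third fallback is a reduction from Hamiltonian cycle: double each vertex $v$ into a pair $v,v'$ at distance $1$, and connect $v'$ only to the copies $u$ of neighbours $u$ of $v$ at distance $1$, with all other distances large. Cheap covers then correspond to cycle structures in $G$, and gadgets (for instance, attaching pendant structures) would rule out the degenerate $4$-cycles $u\,u'\,v\,v'$ that come from a single edge of $G$.

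\emph{Main obstacle.} The characterization step is routine. The real difficulty lies in the gadget step, namely ensuring that every cheap even-cycle partition of the constructed instance encodes a genuine solution of the source problem. The reduction is useless if short even cycles, especially $4$-cycles, can be formed cheaply from local structure, because then the resulting covering problem becomes a polynomially solvable matching- or 2-factor-type problem. I would first try the bipartite restricted-2-factor source, since bipartiteness resolves the parity requirement for free. All effort can then go into verifying that the forbidden short cycles cannot be realized at cost $2n$.
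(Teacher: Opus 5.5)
Your characterization of the $r=2$ case is correct, and it is the structural fact the paper itself uses: a spanning family of vertex-disjoint even cycles of length at least $4$ is the same as the union of two edge-disjoint perfect matchings, one per round. Your $\{1,2\}$-distance instance is also correct, as is the equivalence ``$G$ admits such a cover iff the optimum is $2n$''.

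\textbf{The gap.} You never show that deciding whether $G$ has such a cover is $\mathcal{NP}$-complete. Instead you look for a different source problem plus gadgets, and the hardness is left unproved. Your stated worry rests on a misconception. Allowing $4$-cycles does not turn the problem into a polynomially solvable 2-factor problem; the parity requirement is what makes it hard. None of your three routes is carried out. The bipartite $\{C_4,C_6\}$-free 2-factor route would also require encoding a cycle-length restriction that iTTP does not contain.

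\textbf{The missing step.} It is one line, and no gadgets are needed.
\begin{itemize}
\item Alternating the edges around each even cycle turns such a cover into two edge-disjoint perfect matchings of $G$, and conversely. So a cover exists exactly when the iRR-Eligible instance with $r=2$ is a yes-instance. This is the $\mathcal{NP}$-complete problem of \citet{li2025beyond} that the paper reduces from.
\item Alternatively, restrict to cubic $G$. There, two edge-disjoint perfect matchings leave a third perfect matching, so a cover exists iff $G$ is $3$-edge-colorable. This is $\mathcal{NP}$-complete by Holyer (1981).
\end{itemize}

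With either citation your construction completes the proof. It then matches the paper's argument, which uses $\lambda=1$ and $0/1$ distances and asks for a zero-travel timetable. Your variant additionally gives metric distances and works for every $\lambda\ge 1$.
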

\begin{proof}
We prove the theorem by a reduction from iRR-Eligible, which was proven to be $\mathcal{NP}$-complete by \cite{li2025beyond}. This problem is defined as follows:\\

\textbf{iRR-Eligible \citep{li2025beyond}}.\\
\textit{Input.} A set of teams $T=\{1,\dots,n\}$, where $n$ is even, a set of rounds $R=\{1,2\}$ (i.e., $r=2$), a set $M'$ such that $(i,j) \in M' \iff (j,i) \in M'$ and $\alpha_{ij} = 0, \beta_{ij} = 1$, for all $i,j \in T$. \\
\textit{Question.} Does there exist an iRR tournament satisfying Constraints \textbf{C2}, \textbf{C4} and \textbf{C5}?\\

We build an instance of iTTP by copying all teams and rounds from iRR-Eligible and setting $\lambda=1$ and the travel distance $d_{i,j}$ between two teams $i$ and $j$ to 0 in case $(i,j)\in M'$ and to 1 otherwise.

Given a solution to iRR-Eligible, we now construct a zero travel solution to iTTP.
To this end, we schedule game $(i,j)$ with $i<j$ in round $s\in R$ whenever game $(i,j)$ or $(j,i)$ is scheduled in round $s$ of the given solution.
Since iRR-Eligible takes into account \textbf{C2} and \textbf{C4}, these Constraints are also respected in the iTTP solution.
We now show how to adapt the home-away assignment of the scheduled games to ensure that Constraints \textbf{C1} and \textbf{C3} are satisfied.
For this, construct a graph $G(T,E)$ so that teams correspond to vertices and add an edge in $E$ for every game that we scheduled between any of the teams.
Since each game appears at most once in $M'$ and since each team plays precisely one game per round, observe that the games of round 1 and 2 form a collection of two edge disjoint perfect matchings (see e.g., \citet{ribeiro2023tutorial}) and hence form a collection of cycles covering all vertices (see e.g. \citep{kaski2014switching}).
For any cycle we now arbitrarily orient the cycles so that an incoming arc for any team corresponds to a home game and an outgoing arc corresponds to an away game. This results in each team playing precisely one home and one away game, thereby satisfying \textbf{C3} (and thus also \textbf{C1}). Moreover, since teams return home after every away game, the total travel time induced by selecting the matches in the solution for iRR-Eligible is equal to 0. Hence, the existence of a solution for iRR-Eligible implies the existence of a solution to iTTP with a total travel distance of 0.

For the other direction, if iTTP has a solution with a total travel time of 0, this means that there exist 2 edge disjoint perfect matchings in the iRR-Eligible instance where only eligible teams play each other. Therefore, \textbf{C5} is satisfied for iRR-Eligible. Since any solution for iTTP satisfies \textbf{C2} and \textbf{C4}, the existence of a solution for iTTP with a total travel distance of 0 implies the existence of a solution for iRR-Eligible.
\end{proof} 

Although optimizing iTTP is hard, we now show that finding a feasible solution is easy and can be done in polynomial time (we will exploit this result later in the paper). For this purpose, we recall the concept of home-away patterns (HAPs). If we state for each round whether a team plays home or away, we get a sequence of H's (Home) and A's (Away); such a sequence is known as a HAP. Two HAPs $h_1$ and $h_2$ are called complementary if $h_2$ has a home game on each round where there is an away in $h_1$, and visa versa. A HAP is called \emph{balanced} if it contains an equal number of home and away games. HAPs are induced by the timetable, but a popular decomposition strategy is to first assign HAPs to teams from a so called HAP-set $\mathcal{H}$, and only then to decide on the matches played in each round. This decomposition strategy is known as first-break then schedule (see e.g. \citet{nemhauser1998scheduling}). A pair of consecutive home or away games is called a break. 

In case that $r \leq \frac{n}{2}$, \citet{de2026break} show that it is possible to construct a timetable satisfying Constraints~\textbf{C1}-\textbf{C4} by taking the two (complementary) HAPs with zero breaks, such that half of the teams is assigned the HAP that starts with a home game and the other half of the teams is assigned the HAP that starts with an away game. Observe that it is impossible to construct a timetable when $\lambda=1$ and $r > \frac{n}{2}$ without violating the Constraint that two teams can face each other at most once.

We now propose an approach that always results in a feasible timetable for iTTP in case that $\lambda \geq 2$ and the number of rounds is arbitrary (but smaller than $n-1$). The first step involves constructing a single round-robin schedule, without specifying the home team of each match yet. This can easily be done with the circle method \citep{de1981scheduling} or Vizing's edge coloring algorithm \citep{Januario2016}. 
Next, we take the first $r$ rounds of this schedule and partition the $r$ rounds into consecutive pairs. Recall from the proof of \Cref{thm:complexity} that since any two rounds in a feasible schedule correspond to two edge-disjoint one-factors, and two edge-disjoint one-factors form a collection of even length cycles, we can set the orientations for each cycle for each consecutive pair of rounds such that each team has one home and one away game in this pair of rounds. Hence, any team will have at most two consecutive home games (and  at most two consecutive away games) in its HAP, and we have constructed a feasible timetable for any iTTP instance. 

\section{Lower bounds}\label{sec:LowerBounds}

In this section, we discuss how to obtain lower bounds for iTTP. In \Cref{subsec:ilb}, we first discuss why a well known lower bound from TTP could be of poor quality for iTTP.
Then, we show how to strengthen this bound in \Cref{subsec:dlb,sec:coloring_constraints,subsec:mnt}.

\subsection{Independent Lower Bound}\label{subsec:ilb}

A well known lower bound for TTP is to compute the minimum travel distance for each individual team, and then sum the resulting travel distances. This is known as the Independent Lower Bound (ILB; see \cite{easton2001traveling}). Computing the minimum travel distance of an individual team $t \in T$ can be done by solving a Capacitated Vehicle Routing Problem (CVRP), where the depot corresponds to the venue of $t$, the clients correspond to the opponents of $t$ each with a demand of 1, the capacity of the vehicle is equal to $\lambda$, and each tour corresponds to a road trip (see e.g., \citet{urrutia2007new}).

In the iTTP, the ILB can be computed by solving $n$ independent instances of the Prize-Collecting Vehicle Routing Problem (PCVRP), which generalizes the classic VRP by allowing customers to be visited at most once and by requiring the collection of a minimum total prize, rather than visiting all customers exactly once (see e.g.\ \cite{long2019hybrid}). Indeed, when solving the problem for team $i\in T$, let $i$'s venue correspond to the depot, and associate with each other team in the tournament a client with demand and prize of 1 and require that the minimal prize collected amounts to $\frac{r}{2}$.
Finally, introduce $\frac{r}{2}$ vehicles with capacity of $\lambda$ each. Then, in an optimal solution to the associated PCVRP instance, at least $\frac{r}{2}$ different teams are visited and road trips of length at most $\lambda$ are used. Hence, Constraints \textbf{C1} and \textbf{C4} are satisfied, thus providing a valid lower bound for iTTP. Note that if the triangle inequality is satisfied, exactly $\frac{r}{2}$ customers will be visited, satisfying Constraints \textbf{C3} concerning away games.

The ILB has been shown to be strong for most TTP instances (within 5$\%$ of the best known solution, see e.g.\ \citet{miyashiro2012approximation}). However, the ILB for iTTP  ignores that any two teams are allowed to play at most once, and that each team must be visited precisely $\frac{r}{2}$ times.  An example of this is shown in \Cref{fig:ilb_CIRC12_8}, where it can be seen that each team visits its two nearest neighbors. However, if two teams are each others nearest neighbor, this implies that they meet each other twice. Moreover, remote teams that are not among the nearest neighbors of any of the other teams are never visited, thereby violating Constraints \textbf{C3}. All together, these violations could have a major impact on the quality of the ILB.

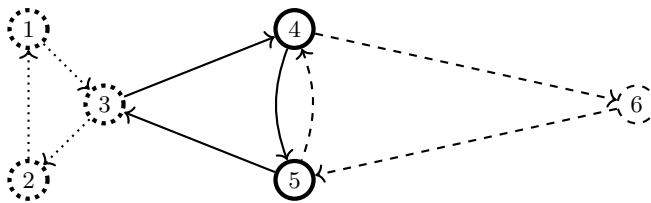
\begin{figure}[h!]
\caption{Example of the ILB for an iTTP with 6 teams and 4 rounds. The optimal travel distance of dotted teams is when they mutually travel to each other. The optimal travel distance of teams 4 and 5 is when they travel to each other and to team 3. Team 6 is not visited by any team according to the ILB.}
\label{fig:ilb_CIRC12_8}
\centering
\begin{tikzpicture}
    \node[default, ultra thick, dotted] (v1) at (-4,1) {1};
    \node[default, ultra thick, dotted] (v2) at (-4,-1) {2};
    \node[default, ultra thick, dotted] (v3) at (-3,0) {3};
    \node[default, ultra thick] (v4) at (-0.5,1) {4};
    \node[default, ultra thick] (v5) at (-0.5,-1) {5};
    \node[default, ultra thick, dashed, thick] (v6) at (4,0) {6};

    \draw[->, thick, dotted] (v1) -- (v3);
    \draw[->, thick, dotted] (v2) -- (v1);
    \draw[->, thick, dotted] (v3) -- (v2);

    \draw[->, thick] (v3) -- (v4);
    \draw[<-, thick,dashed, thick] (v4) to[bend left=20] (v5);
    \draw[->, thick] (v4) to[bend right=20] (v5);
    \draw[->, thick] (v5) -- (v3);

    \draw[->, thick, dashed] (v4) -- (v6);
    \draw[->, thick, dashed] (v6) -- (v5);
\end{tikzpicture}
\end{figure}

\subsection{Dependent Lower Bound}\label{subsec:dlb}

Here, we propose a stronger lower bound by considering all teams together. Specifically, we minimize the total travel distance of all teams, such that each team is visited $\frac{r}{2}$ times and any pair of teams meet at most once.
We thereby only consider the road trips that teams undertake but not the rounds in which games are scheduled. 
We call this bound the Dependent Lower Bound (DLB).

In order to compute this bound, we first enumerate all possible road trips $P_t$ for each team $t \in T$. 
Each road trip $p \in P_t$ visits $\delta_{tp}$ other teams and induces a travel distance of $c_{tp}$. 
Then, we introduce a binary variable $z_{tp}$ that is 1 if road trip $p \in P_t$ is assigned to team $t$. 
In spirit of \citet{Cheung2009}, we then use the following binary linear program to compute the DLB. 

\begin{align}
    \text{min} & \sum_{t \in T}\sum_{p \in P_t}c_{tp}z_{tp} \label{lb2:obj} \\
    \text{s.t.} & \sum_{p \in P_t}\delta_{tp}z_{tp} = \frac{r}{2} & \forall t \in T \label{lb:c1}\\
    & \sum_{i \in T \setminus \{t\}}\sum_{p \in P_i: t \in p}z_{ip} = \frac{r}{2} & \forall t \in T \label{lb:c2}
\end{align}
\begin{align}
    & \sum_{p \in P_t: i \in p}z_{tp} + \sum_{p \in P_i: t \in p}z_{ip} \leq 1 & \forall t,i \in T: t < i \label{lb:c3} \\
    & z_{tp} \in \{0,1\} & \forall t \in T, \ \forall p \in P_t \label{lb2:c6}
\end{align}

The objective function~\eqref{lb2:obj} minimizes the total cost of the road trips of all the teams. Constraints~\eqref{lb:c1} state that teams should play exactly $\frac{r}{2}$ away games, while Constraints~\eqref{lb:c2} state that every team should be present in exactly $\frac{r}{2}$ road trips of other teams, therefore enforcing that every team plays $\frac{r}{2}$ home games. Next, Constraints~\eqref{lb:c3} guarantee that two teams face each other at most once. Finally, Expressions~\eqref{lb2:c6} give the variable domains.


\subsection{One-factorizable Dependent Lower Bound}\label{sec:coloring_constraints}

While the DLB ensures that each team plays $r$ games, it does not ensure that these games can be scheduled within $r$ rounds. If we construct a graph where each node corresponds to a distinct team and two nodes are connected to each other by an edge if one is visited by the other, then this graph is r-regular. 
For a feasible timetable to exist, this graph must be $r$-edge-colorable, where the colors represent the rounds (see also \citet{Guyon2025}).
In case $r<\frac{n}{2}$, it is well known that not every $r$-regular graph is $r$ colorable (i.e., one-factorable), and hence we can strengthen the DLB as follows.
Let $x_{tis}$ be a binary variable that is 1 if teams $t,i \in T$ face each other in round $s \in R$ and is 0 otherwise (with $x_{its} = x_{tis}$). Then, we can add the following Constraints: 

\begin{align}
    & \sum_{p \in P_t: i \in p}z_{tp} + \sum_{p \in P_i: t \in p}z_{ip} = \sum_{s \in R}x_{tis} & \forall t,i \in T: t \neq i \label{lb2:c9} \\
    & \sum_{i \in T: i \neq t}x_{tis} = 1 & \forall t \in T, \ \forall s \in R \label{lb2:c10} \\
    & x_{its} = x_{tis} & \forall i,t \in T: i \neq t, \ \forall s \in R \label{lb2:c11} \\
    & x_{its} \in \{0,1\} & \forall i,t \in T: i \neq t, \ \forall s \in R \label{lb2:c12}
\end{align}

Constraints \eqref{lb2:c9} state that two teams face each other if one is included in the other's road trip. Constraints \eqref{lb2:c10} impose that, in every round, each team faces exactly one opponent. Together, they enforce that a feasible schedule (i.e. satisfying Constraint \textbf{C2}) can be constructed with the set of matches implied by the road trips. However, it ignores whether a feasible schedule can be constructed with the actual road trips (e.g. if $t$ first visits $i$ and then $j$ in a road trip, then $x_{tis} = x_{tjs+1} = 1$ for some $s \in R \setminus \{r\}$), and it does not take into account that teams can play at most $\lambda$ consecutive home-and away games. We refer to the DLB extended with Constraints~\eqref{lb2:c9}-\eqref{lb2:c12} as DLB-1F.

At the same time, it is known that any $r$-regular graph is one-factorable if $r\geq \frac{6n}{7}$. Moreover, the infamous one-factorization conjecture asserts that if $r \geq \frac{n}{2}$, the resulting graph is one-factorable (see also \citet{chetwynd1985regular}; this conjecture was proven to be true when $n$ is sufficiently large by \citet{csaba2016proof}). Hence, adding Constraints~\eqref{lb2:c9}-\eqref{lb2:c12} is only likely to strengthen the bound in case that $r < \frac{n}{2}$.

\subsection{Minimum Number of Legs Dependent Lower Bound}
\label{subsec:mnt}

Rather than total travel distance, \citet{urrutia2006maximizing} minimize the number of legs $D(S)$ in timetable $S$. This is particularly relevant when teams travel by air or when distances between all pairs of teams are identical.
The authors show that minimizing legs is equivalent to maximizing breaks. Similarly, we now show that we can express the overall number of legs in terms of the number of road trips in timetable $S$, which we denote by $L(S)$:

\begin{equation}\label{eq:road_trips}
    D(S) = \frac{nr}{2}+L(S).
\end{equation}

Indeed, each team plays precisely $\frac{r}{2}$ away games and each away game induces a leg. After the last away game of a road trip, there is one more leg in order for the team to return home. As a result of Equation~\eqref{eq:road_trips}, minimizing the total number of legs is equivalent to minimizing the number of times teams start a road trip. Since a road trip contains at most $\lambda$ away games, each team needs at least $\lceil\frac{r}{2\lambda}\rceil$ road trips.
If $r \leq \frac{n}{2}$ a timetable with exactly this number of trips also exist.

\begin{theorem}\label{thm:travel_distance_CON}
    The minimum total number of legs in an iTTP instance with $n$ teams, $r\leq \frac{n}{2}$ rounds, is equal to $\frac{nr}{2} + n\lceil \frac{r}{2\lambda} \rceil$.
\end{theorem}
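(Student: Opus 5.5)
The plan is to prove matching lower and upper bounds, using Equation~\eqref{eq:road_trips} throughout. For any feasible timetable $S$ we have $D(S)=\frac{nr}{2}+L(S)$. It therefore suffices to show two things: every feasible timetable has $L(S)\geq n\lceil \frac{r}{2\lambda}\rceil$, and when $r\leq \frac{n}{2}$ some feasible timetable attains equality.

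\emph{Lower bound.} By \textbf{C3}, each team plays exactly $\frac{r}{2}$ away games. By \textbf{C1}, each road trip contains at most $\lambda$ of these. Hence every team starts at least $\lceil \frac{r}{2\lambda}\rceil$ road trips. Summing over the $n$ teams gives $L(S)\geq n\lceil \frac{r}{2\lambda}\rceil$.

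\emph{Upper bound.} I would follow the same route as the construction of \citet{de2026break} for $\lambda=1$: fix two complementary HAPs, assign each to half of the teams, and schedule only games between the two halves.
\begin{itemize}
\item Write $\frac{r}{2}=q\lambda+s$ with $0\leq s<\lambda$.
\item Let $h_1=(A^\lambda H^\lambda)^q A^s H^s$, where the tail $A^sH^s$ is omitted when $s=0$. Let $h_2$ be its complement, $(H^\lambda A^\lambda)^q H^s A^s$.
\item Both HAPs have length $r$ and contain $\frac{r}{2}$ home and $\frac{r}{2}$ away games, so \textbf{C3} holds.
\item Both consist of alternating maximal blocks of length at most $\lambda$, so \textbf{C1} holds.
\item Each HAP has exactly $q+[s>0]=\lceil \frac{r}{2\lambda}\rceil$ blocks of $A$'s, i.e.\ that many road trips.
\item Split $T$ into $T_1$ and $T_2$ of size $\frac{n}{2}$ each. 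Assign $h_1$ to the teams in $T_1$ and $h_2$ to those in $T_2$.
\end{itemize}

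\emph{Pairing games.} Because the HAPs are complementary, in every round each team of $T_1$ has the opposite home/away status of each team of $T_2$. So any game between $T_1$ and $T_2$ in any round has a consistent home/away orientation given by the HAPs. The complete bipartite graph $K_{n/2,n/2}$ on $T_1\cup T_2$ decomposes into $\frac{n}{2}$ edge-disjoint perfect matchings; for example, match $a_k$ with $b_{k+s \bmod n/2}$ in the $s$-th matching, which is a cyclic Latin square. Since $r\leq \frac{n}{2}$, we can assign $r$ distinct matchings to the $r$ rounds.

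This gives one game per team per round (\textbf{C2}), and each pair meets at most once because the matchings are edge-disjoint (\textbf{C4}, with $\alpha_{ij}=0$, $\beta_{ij}=1$). The HAPs fix who plays at home, so \textbf{C1} and \textbf{C3} are inherited. Every team makes exactly $\lceil \frac{r}{2\lambda}\rceil$ road trips, so $D(S)=\frac{nr}{2}+n\lceil\frac{r}{2\lambda}\rceil$.

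The lower bound is immediate. The main step is designing HAPs that are simultaneously balanced, respect $\lambda$, use the minimum number of away blocks, and are complementary. The hypothesis $r\leq\frac{n}{2}$ is used only in the last step, where enough edge-disjoint matchings between the two halves must exist.
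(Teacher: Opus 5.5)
Your proof is correct and matches the paper's: the lower bound is the same road-trip count combined with Equation~\eqref{eq:road_trips}, and the upper bound uses the same pair of complementary HAPs built from blocks $\text{A}^\lambda\text{H}^\lambda$ (plus a shorter tail $\text{A}^s\text{H}^s$ when $\lambda \nmid \frac{r}{2}$), each assigned to half of the teams. The only difference is that you build the inter-half schedule explicitly from a cyclic decomposition of $K_{n/2,n/2}$ into perfect matchings, whereas the paper simply invokes \citet[Section 2]{de2026break} for the existence of such a timetable when $r\leq\frac{n}{2}$.
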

\begin{proof}
    We first construct two complementary HAPs with $\lceil \frac{r}{2\lambda} \rceil$ legs each.
    To this purpose, let H$^\lambda$ and A$^\lambda$ denote a sequence of $\lambda$ consecutive home and away games, respectively. Define $j := \frac{r}{2}\bmod{}\lambda$. If $j = 0$, then the two patterns consecutively take either H$^\lambda$A$^\lambda$ or A$^\lambda$H$^\lambda$.
    If $j \geq 1$, the two patterns consecutively take either H$^\lambda$A$^\lambda$ or A$^\lambda$H$^\lambda$ for the first $r-2j$ rounds, followed by H$^j$A$^j$ in case of the first HAP and A$^j$H$^j$ in case of the complementary HAP.

    It follows from \citet[Section 2.]{de2026break} that in case that $r \leq \frac{n}{2}$, we can always construct a feasible timetable by arbitrarily assigning half of the teams to one HAP and all other teams to the complement of this HAP. 
    The bound in this theorem thus follows immediately from Equation~\eqref{eq:road_trips}.
\end{proof}

Let $v_{nr}$ be a lower bound on the minimal number of legs for a tournament with $n$ teams and an arbitrary number of rounds $r$. 
In spirit of \citet{urrutia2007new} for the classic TTP, we now define the minimum number of legs dependent lower (DLB-MinLeg) bound by adding the following Constraint to the formulation of DLB.
\begin{equation}\label{lb2:c8}
    \sum_{t \in T}\sum_{p \in P_t}z_{tp} \geq v_{nr} 
\end{equation}

In case $r\leq \frac{n}{2}$, \Cref{thm:travel_distance_CON} shows that enforcing that no team plays more than $\lambda$ consecutive home or away games is sufficient to enforce that Constraint \eqref{lb2:c8} is met and hence that DLB=DLB-MinLeg.
In the other case that $r> \frac{n}{2}$, we compute a bound on $v_{nr}$ by constructing an incomplete round robin tournament that maximizes the overall number of breaks in the timetable.
For this, we apply a Benders' decomposition approach where the master problem constructs a break maximal HAP set while the sub problem checks feasibility of this HAP set.
For detailed documentation of this approach, we refer to \citet{li2025beyond} and \citet{VanBulck2023}.

\section{Solution strategies}\label{sec:UpperBounds}

In this section, we discuss strategies to obtain solutions for iTTP instances. In \Cref{sec:UpperBounds_IP}, we present a straightforward integer programming formulation  while in \Cref{sec:HAPdriven}, we describe a formulation that uses road trip variables. The linear programming (LP) relaxations of these models also yield lower bounds, and will later be compared to the lower bounds presented in \Cref{sec:LowerBounds}. In \Cref{sec:HAPconstrained}, we construct a formulation that assumes the HAP of teams is fixed. Finally, in \Cref{sec:UpperBounds_greedy_matching}, we discuss a greedy matching algorithm inspired by earlier work on iRR timetabling. 

\subsection{A straightforward  formulation}\label{sec:UpperBounds_IP}

Let $x_{ijs}$ be a binary variable that is 1 if team $i$ plays home against team $j$ in round $s$ and is 0 otherwise, together with the variable $y_{tij}$ that is 1 if team $t$ travels from $i$ to $j$ at some point in the tournament, and is 0 otherwise. We also define the set $R_{s} = \{1,\dots,s\}$ for any $s \in R$. The following formulation, which we label $\mathcal{F}_1$, is similar to the one given in \citet[Section 2.1.]{melo2009traveling}, except that we now enforce an iRR instead of a double round format, and that \citet{melo2009traveling} assume that the venue of each game is predetermined. Similar formulations using the $x$-variables defined here can be found in, for example, \citet{goerigk2016combined} and \citet{duran2019scheduling}.

\begin{align}
    \text{min} & \sum_{t \in T}\sum_{i \in T}\sum_{j \in T} d_{ij}y_{tij} \label{IP:obj} \\
    \text{s.t.} & \sum_{s \in R}(x_{ijs}+x_{jis}) \leq 1 & \forall i,j \in T: i < j \label{IP:c1}\\
    & \sum_{j \in T \setminus \{i\}}(x_{ijs}+x_{jis}) = 1 & \forall i \in T, \ s \in R \label{IP:c2}\\
    & \sum_{j \in T \setminus \{i\}}\sum_{s \in R}x_{ijs} = \frac{r}{2} & \forall i \in T \label{IP:c3} \\
    & x_{its-1} + x_{jts} - 1 \leq y_{tij} & \forall i,j,t \in T, \ s \in R \setminus \{1\} \label{IP:c4} \\
    & x_{its-1} + \sum_{j \in T \setminus \{i\}}x_{tjs} - 1 \leq y_{tit} & \forall i,t \in T, \ s \in R \setminus \{1\} \label{IP:c5} \\
    & \sum_{j \in T \setminus \{i\}}x_{tjs-1} + x_{its} - 1 \leq y_{tti} & \forall i,t \in T, \ s \in R \setminus \{1\} \label{IP:c6} \\
    & x_{it1} \leq y_{tti} & \forall i,t \in T \label{IP:c7}\\
    & x_{itr} \leq y_{tit} & \forall i,t \in T \label{IP:c8}\\
    & \sum_{f = s}^{s+\lambda}\sum_{j \in T \setminus \{i\}}x_{jif} \leq \lambda & \forall i \in T, \ s \in R_{r-\lambda} \label{IP:c9} 
\end{align}
\begin{align}
    & \sum_{f = s}^{s+\lambda}\sum_{j \in T \setminus \{i\}}x_{jif} \geq 1 & \forall i \in T, \ s \in R_{r-\lambda} \label{IP:c10} \\
    & x_{ijs} \in \{0,1\} & \forall i,j \in T: i \neq j, \ s \in R \label{IP:c12}\\
    & y_{tij} \in \{0,1\} & \forall i,j,t \in T \label{IP:c13}
\end{align}

The objective function \eqref{IP:obj} minimizes the total travel distance. Constraints \eqref{IP:c1}-\eqref{IP:c3} find a valid iRR timetable, in which teams play against the same opponent at most once \eqref{IP:c1}, exactly one game in every round \eqref{IP:c2}, and exactly $\frac{r}{2}$ home games \eqref{IP:c3}, and consequently also $\frac{r}{2}$ away games. Constraints \eqref{IP:c4} are linking constraints that state that team $t$ travels from $i$ to $j$ if there exists a round $s$ such that $t$ plays away against $i$ in $s$ and away against $j$ in $s+1$, respectively. Constraints \eqref{IP:c5} enforce a team to travel back home if it ends a road trip while Constraints \eqref{IP:c6} enforce a team to travel from its home venue to the venue of its opponent if it starts a road trip. Constraints \eqref{IP:c7}-\eqref{IP:c8} do the same for the first and last rounds. Constraints \eqref{IP:c9}-\eqref{IP:c10} regulate that teams play at most $\lambda$ consecutive home and away games. Finally, the variable domains are given in \eqref{IP:c12}-\eqref{IP:c13}. 

It is known that the LP-relaxation of this formulation, when applied to the classic TTP, yields a very weak lower bound \citep{ribeiro2012sports}. 
We expect a similar performance in the case of iTTP, based on the intuition that the $x_{ijs}$ variables in constraints \eqref{IP:c4}-\eqref{IP:c6} can be set sufficiently small such that they never trigger the travel variables ($y_{tij}$), rendering only Constraints \eqref{IP:c7}-\eqref{IP:c8} binding. We can therefore strengthen this formulation by adding a lower bound on the minimum number of legs:

\begin{equation}\label{IP:c14}
    \sum_{t \in T}\sum_{i \in T}\sum_{j \in T}y_{tij} \geq v_{nr}
\end{equation}


\subsection{A road trip formulation}\label{sec:HAPdriven}

Similar to the model to compute the DLB, the second formulation we propose ($\mathcal{F}_2$) uses road trip variables. The difference is that we now specify the start date of each road trip. Let $z_{tps}$ be a binary variable that is 1 if team $t$ starts road trip $p\in P_t$ in round $s$ and 0 otherwise. If a team $t$ starts a road trip $p$ in round $s$, we say that $p$ is active in rounds $s, s+1,\dots,s+\delta_{tp}-1$ for team $t$. Recall that $\delta_{tp}$ is the length of road trip $p \in P_t$, and that $R_s = \{1,\dots,r\}, s \in R$. Let $\tau_{tp} = \max \{s \in R \mid s + \delta_{tp} \leq r\}$, i.e. $\tau_{tp}$ is the latest round in which team $t$ can start road trip $p$. Finally, let $p_l$ be the $l^{\text{th}}$ visited team in trip $p$. We define the following sets:

\begin{align}
    \nonumber & A_{ts} = \{(p,q) \ | \ p \in P_t, \ q \in R_{\tau_{tp}}: q \leq s < q+\delta_{tp}\} & \forall t \in T, \ s \in R\\
    \nonumber & V_{its} = \{(p,q) \ | \ p \in P_i: t \in p, \ q \in R_s: q \leq s < q+\delta_{ip} \ \land \ p_{s-q+1} = t\} & \forall t,i \in T: t \neq i, \ s \in R
\end{align}

In other words, $A_{ts}$ denotes the set of road trip-start round pairs $(p,q)$ such that trip $p \in P_t$, when started in round $q$, is active for team $t$ in round $s$. Similarly, $V_{its}$ denotes the set of road trip-start round pairs $(p,q)$ such that team $i$ starts road trip $p$ in round $q$, and visits team $t$ during that road trip in round $s$. Observe that for each triple $i,t,s$, there is exactly one round $q$ for each road trip $p \in P_i$, such that $i$ travels to $t$ in round $s$. Using these sets, we can construct the following formulation:

\begin{align}
    \text{min} & \sum_{t \in T}\sum_{p \in P_{t}}\sum_{s \in R_{\tau_{tp}}}c_{tp}z_{tps} \label{IP-Trips:obj} \\
    \text{s.t.} & \sum_{p \in P_t}\sum_{s \in R_{\tau_{tp}}}\delta_{tp}z_{tps} = \frac{r}{2} & \forall t \in T \label{IP-Trips:c1}\\
    & \sum_{s \in R} \bigl( \sum_{(p,q) \in V_{tis}}z_{tpq} + \sum_{(p,q) \in V_{its}}z_{ipq} \bigr) \leq 1 & \forall t,i \in T: t < i \label{IP-Trips:c2}\\
    & \sum_{(p,q) \in A_{t,s-1}}z_{tpq} + \sum_{p \in P_t: \tau_{tp} \leq s}z_{tps} \leq 1 & \forall t \in T, \ s \in R \setminus \{1\} \label{IP-Trips:c3}\\
    & \sum_{(p,q) \in A_{ts}}z_{tpq} + \sum_{i \in T \setminus \{t\}}\sum_{(p,q) \in V_{its}}z_{ipq} = 1 & \forall t \in T, \ s \in R \label{IP-Trips:c5} \\ 
    & \sum_{f=s}^{s+\lambda}\sum_{i \in T \setminus \{t\}}\sum_{(p,q) \in V_{itf}}z_{ipq} \leq \lambda & \forall t \in T, \ s \in R_{r-\lambda} \label{IP-Trips:c6} \\
    & z_{tps} \in \{0,1\} & \forall t \in T, \ p \in P_t, \ s \in R_{\tau_{tp}} \label{IP-Trips:c7}
\end{align}

The objective minimizes the total travel distance, while Constraints~\eqref{IP-Trips:c1} state that each team should play exactly half of its games away. Constraints~\eqref{IP-Trips:c2} state that two teams can face each other at most once. Constraints~\eqref{IP-Trips:c3} ensure that a team can only start a road trip in round $s$ if it has no active road trip in round $s-1$. Next, Constraints~\eqref{IP-Trips:c5} state that a team $t$ is either on exactly one road trip in round $s$, or it visited by exactly one other team $i$ in round $s$. Hence, it also prevents road trips from overlapping with each other.
Constraints~\eqref{IP-Trips:c6} guarantee that in any $\lambda+1$ consecutive rounds, every team has at least one away game. Note that because of Constraints~\eqref{IP-Trips:c3} and since road trips have length at most $\lambda$, a team automatically does not play more than $\lambda$ consecutive away games. Finally, Expressions~\eqref{IP-Trips:c7} give the domains of the $z$-variables. 

Comparing this formulation with  $\mathcal{F}_1$, we observe that it has $\mathcal{O}(n^2)$ constraints,  while $\mathcal{F}_1$ has $\mathcal{O}(n^4)$ constraints. 
While $\mathcal{F}_1$ has a number of variables polynomial in the number of teams and rounds, $\mathcal{F}_2$ uses $n\bigl(\sum_{f=1}^\lambda \binom{n-1}{f}\bigr)r=\mathcal{O}(n^\lambda r)$ variables. In the Appendix, we prove the following result:

\begin{theorem}\label{thm:LP}
    The LP-relaxation of $\mathcal{F}_2$ is strictly stronger than the LP-relaxation of $\mathcal{F}_1$.
\end{theorem}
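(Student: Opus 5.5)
The proof has two parts. First we show that every feasible LP solution of $\mathcal{F}_2$ can be mapped to a feasible LP solution of $\mathcal{F}_1$ with objective value no larger, so $\mathrm{LP}(\mathcal{F}_2)\geq \mathrm{LP}(\mathcal{F}_1)$. Second, we exhibit a small instance on which the inequality is strict.

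\textbf{Step 1: the mapping.} Given a fractional $z$ feasible for the LP-relaxation of $\mathcal{F}_2$, define
\begin{align*}
x_{tis} := \sum_{(p,q)\in V_{its}} z_{ipq},
\end{align*}
meaning home team $t$ hosts $i$ in round $s$. Define $y_{tij}$ as the total $z$-weight of road trips of $t$ that contain the leg $i\to j$:
\begin{align*}
y_{tij} := \sum_{p\in P_t:\,(i,j)\text{ consecutive in }p}\ \sum_{q} z_{tpq}, \qquad y_{tti} := \sum_{p:\,p_1=i}\sum_q z_{tpq}, \qquad y_{tit} := \sum_{p:\,p_{\delta_{tp}}=i}\sum_q z_{tpq}.
\end{align*}
By construction the two objectives coincide, since $c_{tp}$ is the sum of the leg distances of $p$.

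\textbf{Step 2: feasibility of the image.}
\begin{itemize}
\item \eqref{IP:c1} is exactly \eqref{IP-Trips:c2}.
\item \eqref{IP:c2} follows from \eqref{IP-Trips:c5}, because $\sum_{(p,q)\in A_{ts}}z_{tpq}$ equals the total away weight $\sum_j x_{jts}$ of $t$ in round $s$.
\item \eqref{IP:c3} follows from \eqref{IP-Trips:c1} combined with \eqref{IP:c2}: $r/2$ away games and $r$ games in total give $r/2$ home games.
\item \eqref{IP:c9} is \eqref{IP-Trips:c6}.
\item \eqref{IP:c10} follows because a trip covers at most $\lambda$ rounds, and \eqref{IP-Trips:c3} forbids a new trip from starting immediately after an active one.
\end{itemize}

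\textbf{Step 3: the linking constraints (main obstacle).} For \eqref{IP:c4}, we must show $x_{its-1}+x_{jts}-1\leq y_{tij}$. Let $a$ be the weight of trips of $t$ at $i$ in round $s-1$, and $b$ the weight of trips at $j$ in round $s$. Partition the weight at $i$ in round $s-1$ into:
\begin{itemize}
\item trips that continue to $j$, of weight $w\leq y_{tij}$;
\item trips that end at $i$ or continue elsewhere, of weight $a-w$.
\end{itemize}
Trips at $j$ in round $s$ that did not come from $i$ are either other trips active in round $s-1$, or trips starting in round $s$. Using \eqref{IP-Trips:c3} and \eqref{IP-Trips:c5} (total weight at most 1 per round), we get $(a-w)+(b-w)\leq 1$, hence $a+b-1\leq w\leq y_{tij}$.

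Constraints \eqref{IP:c5}, \eqref{IP:c6} and \eqref{IP:c7}--\eqref{IP:c8} follow by the same counting argument, replacing the second visit with "at home" or "trip start". The trip-start case needs care with \eqref{IP-Trips:c3} and with the definition of $\tau_{tp}$. We expect this step to be the main obstacle: it requires a careful accounting of trips that are simultaneously active, together with the fact that at most unit weight is active per team per round.

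\textbf{Step 4: strictness.} Take $\lambda=2$, $r=2$ (so each team has exactly one away game), and $n=4$ with unit distances. Every integer solution then costs $2n=8$, and the road-trip LP of $\mathcal{F}_2$ also forces value $8$: each team's away weight is $1$ and every trip of length $1$ costs $2$.

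For $\mathcal{F}_1$, set all $x_{ijs}=1/6$. This satisfies \eqref{IP:c1}--\eqref{IP:c3}, and the linking constraints \eqref{IP:c4}--\eqref{IP:c6} then have nonpositive left-hand sides. Only \eqref{IP:c7}--\eqref{IP:c8} bind, giving $y_{tti}=y_{tit}=1/6$. The resulting objective is $4\cdot 3\cdot 2\cdot \tfrac16=4<8$.

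If this small example is degenerate in some way, we would fall back on a larger one built from the same idea: uniform fractional $x$ makes the linking constraints vacuous. That idea is exactly the weakness of $\mathcal{F}_1$ observed after its definition.
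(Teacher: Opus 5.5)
Your proof that $\mathrm{LP}_{\mathcal{F}_2}\geq \mathrm{LP}_{\mathcal{F}_1}$ is the paper's: the same maps for $x$ and $y$, followed by a constraint-by-constraint check. For strictness the paper only points to Table~\ref{tab:DiLB}, so an explicit instance is the more rigorous route. However, your $\mathcal{F}_2$ value is miscomputed. With $\lambda=2$ and $r=2$, a two-game trip may start in round~1, and the relaxation can give it weight $\tfrac12$. This satisfies \eqref{IP-Trips:c1} ($2\cdot\tfrac12=1$) at cost $\tfrac32$ instead of $2$. For example, let each team $t$ take the trip visiting $t+1$ and then $t+2$ (indices modulo 4) with weight $\tfrac12$. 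Then:
\begin{itemize}
\item every team is away with weight $\tfrac12$ and visited with weight $\tfrac12$ in both rounds;
\item every pair meets with total weight at most $1$;
\item nothing starts in round 2, so \eqref{IP-Trips:c3} holds;
\item \eqref{IP-Trips:c6} is vacuous because $R_{r-\lambda}=\emptyset$.
\end{itemize}
Since $c_{tp}=\delta_{tp}+1\geq\tfrac32\delta_{tp}$, the relaxation value is exactly $6$, not $8$. Strictness still holds ($6>4$), but the reason you give is false. Alternatively, take $\lambda=1$. Then only one-game trips exist, so the value really is $8$, and your uniform $x_{ijs}=\tfrac16$ still satisfies \eqref{IP:c9}--\eqref{IP:c10}, since the away weight over rounds 1--2 is exactly $1$.

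There are also two smaller slips in Steps 2--3.

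In Step 3, $(a-w)+(b-w)\leq 1$ only yields $a+b-1\leq 2w$. What you need is $a+(b-w)\leq 1$, and your own decomposition gives it. The trips at $i$ in round $s-1$ and the trips at $j$ in round $s$ that were not at $i$ in round $s-1$ are disjoint. Both families lie among the trips active in round $s-1$ or started in round $s$, whose total weight is at most $1$ by \eqref{IP-Trips:c3}.

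In Step 2 you have interchanged \eqref{IP:c9} and \eqref{IP:c10}. In $\mathcal{F}_1$, $x_{jif}$ means that $i$ plays away, so \eqref{IP:c9} caps $i$'s away weight, whereas \eqref{IP-Trips:c6} caps home weight. Hence \eqref{IP-Trips:c6} together with \eqref{IP-Trips:c5} gives \eqref{IP:c10}. It is \eqref{IP:c9} that needs trip length at most $\lambda$ plus \eqref{IP-Trips:c3}, and in the fractional setting this needs an actual inequality chain, not the integer intuition that a trip cannot start right after another. The chain is as follows:
\begin{itemize}
\item every trip active in round $s+\lambda$ started in some $q\in\{s+1,\dots,s+\lambda\}$;
\item \eqref{IP-Trips:c3} bounds the weight starting in $q$ by the home weight in round $q-1$;
\item so the away weight in round $s+\lambda$ is at most the home weight summed over rounds $s,\dots,s+\lambda-1$, which rearranges to \eqref{IP:c9}.
\end{itemize}
The paper's appendix makes the same mislabeling for \eqref{IP:c9}. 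Your derivation of \eqref{IP:c3} via \eqref{IP:c2} is correct, and cleaner than the paper's.
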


How much stronger the LP-relaxation of $\mathcal{F}_2$ is in practice compared to $\mathcal{F}_1$ is investigated computationally in \Cref{sec:Results} in \Cref{tab:DiLB}. 


\subsection{The HAP-set constrained travel minimization problem}\label{sec:HAPconstrained}

Here, we assume the HAPs of teams are given. For this purpose, let $m: T \times R \rightarrow \{H,A\}$ be a function such that $m$ denotes the current assignment of HAPs to teams, and define $m(t,s) = \text{H}$ if team $t$ plays home in round $s$ and $m(t,s) = \text{A}$ if team $t$ plays away in round $s$. 
Observe that if the HAPs are fixed, the rounds in which teams start their road trips as well as the length of the road trips are fixed. Hence, define $R_t^{'}$ as the set of rounds in which team $t$ starts a road trip, and $a_{ts}$ as the number of consecutive away games starting from round $s \in R_t^{'}$ for team $t$. Then, we can define $P_{ts}$ as the set of road trips that team $t$ can start in round $s$. Moreover, we define $W_{ts}$ as the sets of triples $(u,p,q)$ such that team $u$ visits team $t$ in round $s$ when starting road trip $p$ in round $q$. These sets are formally defined as follows:

\begin{align}
    \nonumber & R_t^{'} = \{s \in R \ | \ (s = 1 \ \lor \ m(t,s-1) = \text{H}) \ \land \ m(t,s) = \text{A})\} & \forall t \in T \\
    \nonumber & P_{ts} = \{p \in P_t \ | \ s \leq \tau_{tp}, \ s \in R_t^{'}, \ \delta_{tp} = a_{ts}\} & \forall t \in T, \ s \in R \\
    \nonumber & W_{ts} = \{(u,p,q) \ | \ u \in T: m(u,s) = \text{A}, (p,q) \in V_{uts}: q \in R_u^{'}\} & \forall t \in T, \ s \in R: m(t,s) = \text{H}
\end{align}

Note that $P_{ts} = \emptyset$ in case team $t$ does not start a road trip in round $s$. Using these sets, we construct the following formulation with the road trip variables:

\begin{align}
    \text{min} & \sum_{t \in T}\sum_{s \in R_t^{'}}\sum_{p \in P_{ts}}c_{tp}z_{tps} \label{IP-HAP:obj} \\
    \text{s.t.} & \sum_{p\in P_{ts}} z_{tps} = 1 & \forall t\in T,\ s\in R_t^{'} \label{IP-HAP:c1}\\
    & \sum_{(u,p,q) \in W_{ts}}z_{upq} = 1 & \forall t \in T,\ s\in R: m(t,s) = \text{H}\label{IP-HAP:c2}\\
    & \sum_{s\in R_t^{'}} \sum_{p\in P_{ts}: i\in p} z_{tps} + \sum_{s\in R_i^{'}} 
    \sum_{p\in P_{is}: t\in p} z_{ips} \leq 1 & \forall t,i\in T: t < i\label{IP-HAP:c3}\\
    & z_{tps} \in \{0,1\} & \forall t \in T, \ s\in R_t^{'}, \ p \in P_{ts} \label{IP-HAP:c6}
\end{align}

The objective again minimizes the total travel distance, while Constraints~\eqref{IP-HAP:c1} require that each team is assigned a road trip during all time slots during which it HAP implies the start of a road trip. Constraints~\eqref{IP-HAP:c2} state that each team playing home is visited by one of the teams playing away. Finally, Constraints~\eqref{IP-HAP:c3} state that each pair of teams meets each other at most once and Constraints~\eqref{IP-HAP:c6} are the binary constraints.
We coin the problem of finding an iTTP solution when the HAPs of teams are given as the HAP-set constrained travel minimization problem and refer to the formulation given by Constraints~\eqref{IP-HAP:c1}-\eqref{IP-HAP:c6} as $\mathcal{F}_2$-HAP.
Note that since now we only have to define the $z$-variables for the rounds in which teams start a road trip and the length of this road trip is fixed, this model uses significantly fewer variables than the model given in \Cref{sec:HAPdriven}.

\subsection{Greedy matching algorithm}\label{sec:UpperBounds_greedy_matching}

In \citet{li2025beyond}, a heuristic is proposed to quickly find high quality timetables for iRR tournaments. Although it was developed in the context of youth sports scheduling where teams travel back home after each game, we show that this algorithm can be modified to accommodate our problem. First, we generate a set of feasible HAPs, which we denote as $\mathcal{H}$. In the case of iTTP, a HAP is feasible if it contains at most $\lambda$ consecutive home and away games (satisfying \textbf{C1}), and contains an equal number of home and away games (satisfying \textbf{C3}).
Then, an initial solution is found by assigning to each team a HAP from $\mathcal{H}$ such that in every round, the number of teams that play home is equal to the number of teams that play away. \citet{li2025beyond} propose to generate the complete set of feasible HAPs, and to assign HAPs to teams via an integer program.

Given a home-away assignment, each round can be seen as a minimum weight bipartite matching problem, where each home team should be matched to an away team at minimum cost. \citet{li2025beyond} propose that the rounds are solved sequentially, such that round $s$ contains a bipartite graph $G(T_{s}^h,T_{s}^a,E_s)$ with $T_{s}^h$ the set of home teams in round $s$, $T_{s}^a$ the set of away teams in round $s$, and $E_s$ the set of edges between $T_s^h$ and $T_s^a$ such that there is no edge between two teams that were already matched in an earlier round. 

Let $c_{ijs}$ be the cost of scheduling match $(i,j), i \in T_{s}^h, j \in T_{s}^a$ in round $s$. In the problem considered in \citet{li2025beyond}, teams travel back home after every away game. Therefore, $c_{ijs} = d_{ij}$ and is independent of the matches scheduled in other rounds. In contrast, in iTTP the cost of a match in round $s > 1$ depends on the matching in round $s-1$. Recall that with $m$ we denote the current assignment of HAPs to teams, such that $m(i,s) = \text{H}$ if team $i$ plays home in round $s$ and $m(i,s) = \text{A}$ if team $i$ plays away in round $s$, and let $o(i,s)$ be the opponent of $i$ in $s$. Then, the cost of match $(i,j)$ in round $s$ is as follows:

\begin{align}
c_{ijs} &= d_{ji} & \text{if } (s > 1 \ \land \ m(j,s-1) = \text{H}) \ \lor \ s = 1 \label{c_ijs1}\\
        &+ d_{ij} & \text{if } (s \leq r-1 \ \land \ m(j,s+1) = \text{H}) \ \lor \ s = r \label{c_ijs2}\\
        &+ d_{o(j,s-1),i} & \text{if } s \geq 2 \ \land \ m(j,s-1) = \text{A} \label{c_ijs3}
\end{align}

In other words, the cost $c_{ijs}$ takes into account the cost for $j$ to travel to $i$ in case $j$ has to start from its home venue \eqref{c_ijs1}, together with the cost for $j$ to travel back home from $i$ in case $s$ is the last round or $j$ plays home in the next round \eqref{c_ijs2}, as well as the cost for $j$ to travel from the venue of the previous opponent to $i$ in case $j$ also played away in the previous round \eqref{c_ijs3}.

We now distinguish two approaches. First, with \emph{constructive greedy matching} (GM-c), we denote the algorithm that assigns HAPs to teams only once, and for this assignment, finds a feasible timetable by solving a sequence of minimum weight bipartite matchings. Second, in \citet{li2025beyond}, it is proposed to iteratively modify the HAPs, each time solving a new series of minimum weight perfect matchings, until a convergence criterion is met. We refer to this algorithm as \emph{iterative greedy matching} (GM-it).

\subsubsection{Constructive greedy matching}\label{sec:constructive}

Greedily taking perfect matchings round by round may lead to scenarios where the graph consisting only of unselected matches does not contain a perfect matching, and hence where a timetable of $r$ rounds cannot be constructed \citep{schmand2022greedy}. In this case, the set of selected matches is also known as a premature set of one-factors (see e.g.\ \citet{rosa1982premature}). However, from \citet[Theorem 5.]{li2025beyond} we know that, if all opponents can play each other and $r \leq \frac{n}{4}$, the greedy matching algorithm always finds a feasible timetable, no matter the assignment of HAPs to teams. Moreover, if $|\mathcal{H}| = 2$, the following stronger result holds:

\begin{theorem}\label{thm:Hall}
    Let $r \leq \frac{n}{2}$ and $|\mathcal{H}| = \{h_1,h_2\}$, such that $h_1$ and $h_2$ are complementary HAPs, and all teams are eligible to play against each other. Then, if we assign $h_1$ to one half of the teams and $h_2$ to the other half, the greedy matching algorithm always finds a feasible timetable in which teams with HAP $h_1$ play against teams with HAP $h_2$.
\end{theorem}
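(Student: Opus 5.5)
Since $h_1$ and $h_2$ are complementary, in every round $s$ exactly the $\frac{n}{2}$ teams with HAP $h_1$ play on one side and the $\frac{n}{2}$ teams with $h_2$ on the other. Let $X$ be the teams assigned $h_1$ and $Y$ those assigned $h_2$, so $|X|=|Y|=\frac{n}{2}$. Then the bipartite graph $G(T_s^h,T_s^a,E_s)$ of round $s$ always has bipartition $\{X,Y\}$; only which side is home changes from round to round. Every match the greedy algorithm selects therefore joins a team of $X$ to a team of $Y$, which already gives the last claim of the statement. The remaining task is to show the algorithm never gets stuck, that is, that $E_s$ contains a perfect matching in every round $s\le r$.

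I will prove this by induction on the rounds. Assume rounds $1,\dots,s-1$ have been scheduled, with $s\le r\le\frac{n}{2}$. The removed edges, namely the previously selected matches, form $s-1$ edge-disjoint perfect matchings between $X$ and $Y$. So every vertex has lost exactly $s-1$ incident edges. Since all teams are eligible to play each other, the remaining graph $E_s$ is the complete bipartite graph $K_{n/2,n/2}$ minus an $(s-1)$-regular spanning subgraph. It is therefore a $\bigl(\frac{n}{2}-s+1\bigr)$-regular bipartite graph. Because $s\le r\le \frac{n}{2}$, this degree is at least $1$.

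A $k$-regular bipartite graph with $k\ge 1$ has a perfect matching by Hall's theorem. For any $S\subseteq X$, the $k|S|$ edges leaving $S$ land in $N(S)$, and $N(S)$ can absorb at most $k|N(S)|$ edges, so $|N(S)|\ge|S|$. Consequently, the minimum weight bipartite matching problem of round $s$ is feasible, whatever the costs $c_{ijs}$ are. In particular, the dependence of the costs on the previous round does not matter. The induction then carries through all $r$ rounds.

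Every team plays one game per round, and no pair meets twice because used edges are deleted, so \textbf{C2} and \textbf{C4} hold. \textbf{C1} and \textbf{C3} follow from the HAPs. Indeed, the assigned HAPs are feasible by assumption, and each team plays at home exactly when its HAP says so, because matches go between the current home side and the current away side. The only delicate step is checking that the leftover graph stays regular. This relies on every earlier round being a perfect matching across the \emph{same} bipartition $\{X,Y\}$, which is exactly where complementarity of $h_1,h_2$ is used. Without it, previously removed edges could lie inside a future side and regularity would fail, which is what breaks the argument for general HAP sets.
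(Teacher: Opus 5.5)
Your proof is correct and follows the same route as the paper. Complementarity fixes the bipartition $\{X,Y\}$ in every round, and deleting the earlier rounds' perfect matchings leaves a regular bipartite graph, so Hall's theorem gives a perfect matching each round. Your explicit degree count $\frac{n}{2}-s+1\geq 1$ is more precise than the paper's phrasing (which speaks of a bipartite graph ``with degree $r$'') and shows exactly where $r\leq\frac{n}{2}$ is used.
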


\begin{proof}
    Let $T_{h_1}$ be the teams that are assigned HAP $h_1$ and let $T_{h_2}$ be the teams that are assigned HAP $h_2$. Since $h_1$ and $h_2$ are complementary, matches are only possible between teams of $T_{h_1}$ and $T_{h_2}$. Since $|T_{h_1}| = |T_{h_2}| = \frac{n}{2}$ and $r \leq \frac{n}{2}$, the set of all possible matches with these two HAPs can be represented by a bipartite graph $G$ with degree $r$. By Hall's marriage theorem, any regular bipartite graph contains a perfect matching. Since $G$ remains a regular bipartite graph after deleting any sequence of perfect matchings until all $r$ rounds are scheduled, the greedy matching algorithm in this case always finds a feasible timetable.
\end{proof}

Therefore, in case that $r \leq \frac{n}{2}$, it is sufficient to take an arbitrary pair of complementary HAPs. A natural question is whether the constructive greedy matching allows a bounded approximation ratio. Here, we will show with an example that this is not the case. 

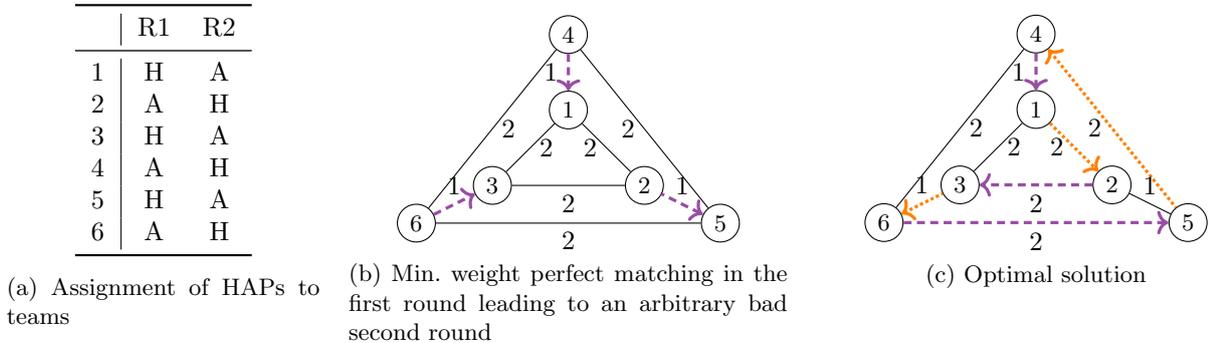
\begin{figure}[ht!]
\caption{\Cref{example1}, which shows that constructive greedy matching does not allow a bounded approximation ratio. Only the matches with a travel distance of 1 or 2 are depicted.}
  \label{fig:example_case1}
\begin{subfigure}[t]{0.25\textwidth}
    \centering
    \begin{minipage}[t][3.55cm][t]{\linewidth} 
        \centering
        \begin{threeparttable}
        \begin{tabular}{c|cc}
        \toprule
             & R1 & R2 \\
             \midrule
            1 & H & A \\
            2 & A & H \\
            3 & H & A \\
            4 & A & H \\
            5 & H & A \\
            6 & A & H \\
            \bottomrule
        \end{tabular}
        \end{threeparttable}
    \end{minipage}
        \caption{Assignment of HAPs to teams}
    \label{fig:HAPS1}
\end{subfigure}
\hspace{4pt}
  \begin{subfigure}[t]{0.35\textwidth}
    \centering
    \begin{tikzpicture}[baseline=(current bounding box.center)]
        \node[default] (w1) at (0,1.5) {$1$};
        \node[default] (w2) at (1,0.5) {$2$};
        \node[default] (w3) at (-1,0.5) {$3$};

        \node[default] (w4) at (0,2.5) {$4$};
        \node[default] (w5) at (2,0) {$5$};
        \node[default] (w6) at (-2,0) {$6$};

        \draw (w1) -- (w2) node[midway, left] {2};
        \draw (w2) -- (w3) node[midway, below] {2};
        \draw (w3) -- (w1) node[midway, right] {2};

        \draw (w4) -- (w5) node[midway, left] {2};
        \draw (w5) -- (w6) node[midway, below] {2};
        \draw (w6) -- (w4) node[midway, right] {2};

        \draw[->, edge4, very thick, densely dashed] (w4) -- (w1) node[midway, left, text=black] {1};
        \draw[<-, edge4, very thick, densely dashed] (w5) -- (w2) node[midway, above, text=black] {1};
        \draw[->, edge4, very thick, densely dashed] (w6) -- (w3) node[midway, above, text=black] {1};
    \end{tikzpicture}
    \caption{Min.\ weight perfect matching in the first round leading to an arbitrary bad second round}
    \label{fig:premature_pm_k_odd}
  \end{subfigure}
  \hspace{4pt}
  \begin{subfigure}[t]{0.35\textwidth}
    \centering
    \begin{tikzpicture}[baseline=(current bounding box.center)]

        \node[default] (w1) at (0,1.5) {$1$};
        \node[default] (w2) at (1,0.5) {$2$};
        \node[default] (w3) at (-1,0.5) {$3$};

        \node[default] (w4) at (0,2.5) {$4$};
        \node[default] (w5) at (2,0) {$5$};
        \node[default] (w6) at (-2,0) {$6$};

        \draw[->, edge5, very thick, densely dotted] (w1) -- (w2) node[midway, left, text=black] {2};
        \draw[->, edge4, very thick, densely dashed] (w2) -- (w3) node[midway, below, text=black] {2};
        \draw (w3) -- (w1) node[midway, right, text=black] {2};

        \draw[<-, edge5, very thick, densely dotted] (w4) -- (w5) node[midway, left, text=black] {2};
        \draw[<-, edge4, very thick, densely dashed] (w5) -- (w6) node[midway, below, text=black] {2};
        \draw (w6) -- (w4) node[midway, right, text=black] {2};

        \draw[->, edge4, very thick, densely dashed]  (w4) -- (w1) node[midway, left, text=black] {1};
        \draw (w5) -- (w2) node[midway, above, text=black] {1};
        \draw[<-, edge5, very thick, densely dotted] (w6) -- (w3) node[midway, above, text=black] {1};
    \end{tikzpicture}
    \caption{Optimal solution}
    \label{fig:optimal_2_factor_k_odd}
  \end{subfigure}
\end{figure}

\begin{example}\label{example1} 
Consider an iTTP instance with $n=6$ and $r=2$. Since the timetable needs to be balanced, each team visits only one other team, after which it immediately travels back home. The travel distance between every pair of teams in the set $\{\{1,4\},\{2,5\},\{3,6\}\}$ is equal to 1, while the travel distance between every pair of teams in the set $\{\{1,2\},\{2,3\},\{3,1\},\{4,5\},\{5,6\},\{6,4\}\}$ is equal to 2. The travel distance between all other pairs of teams is equal to $\alpha$, where $\alpha$ is assumed to be an arbitrary large number. Teams are assigned HAPs according to \Cref{fig:HAPS1}. In this case, the minimum weight perfect matching respecting the HAPs in the first round consists of all matches between teams with a travel distance of 1, as is depicted by the purple dashed arcs in \Cref{fig:premature_pm_k_odd}. However, deleting these matches results in the matches with a travel distance of 2 forming two odd cycles. Hence, in the second round at least one match with a travel distance of $\alpha$ needs to be selected. However, as evidenced by \Cref{fig:optimal_2_factor_k_odd}, there exists a timetable that respects the HAPs and avoids matches with a travel distance of $\alpha$. As $\alpha$ is an arbitrary large number, we have shown that the approximation ratio of constructive greedy matching can be arbitrarily bad.
\end{example}

\subsubsection{Iterative greedy matching}\label{sec:GM_it}

Instead of terminating after a single assignment of HAPs to teams, \citet{li2025beyond} propose to iteratively modify the HAPs with moves in four different neighborhood structures. \Cref{{example1}} suggests that this is indeed recommended if we want to obtain high quality solutions. Two of the four neighborhoods proposed in \citet{li2025beyond} could be used for iTTP: a move in the neighborhood \emph{RandomSwap} swaps the HAPs of two random teams and a move in the neighborhood \emph{ComplementInsertion} replaces the patterns of two teams with complementary HAPs with two other complementary HAPs. However, these neighborhood structures assume that the complete set of HAPs is given. If there are few restrictions on the HAPs, as is the case in iTTP, the number of possible HAPs quickly explodes as $r$ and $\lambda$ increase. For example, for $\lambda=3$ and $r=30$, there are at least 208,606,320 distinct patterns. Even more problematic, is the fact that by only using RandomSwap and ComplementInsertion, it can be impossible to reach certain HAPs. For example, consider the assignment of HAPs to teams as shown in \Cref{tab:example2_HAA}.  It can be verified that \Cref{tab:example2_HAA} does not contain a pair of complementary HAPs and that \Cref{tab:example2_opp_schedule} is a feasible timetable such that each team plays home and away according to \Cref{tab:example2_HAA}. In this case, only the move RandomSwap can be used, making it impossible to reach certain HAPs.

\begin{table}[h!]
    \small
    \setlength{\tabcolsep}{5pt}
\caption{Timetable with no pair of teams with complementary HAPs}
\label{tab:example2}
\begin{subtable}[t]{0.45\textwidth}
    \centering
\begin{threeparttable}
\begin{tabular}{c|ccc ccc}
\toprule
& R1 & R2 & R3 & R4 & R5 & R6 \\
\midrule
1 & H & H & A & A & H & A \\
2 & A & H & H & A & A & H \\
3 & A & A & A & H & H & H \\
4 & H & A & H & H & A & A \\
5 & H & H & A & A & H & A \\
6 & A & H & H & A & A & H \\
7 & A & A & A & H & H & H \\
8 & H & A & H & H & A & A \\
\bottomrule
\end{tabular}
\end{threeparttable}
\caption{Home-away assignment}
\label{tab:example2_HAA}
\end{subtable}
\hspace{0.45cm}
\begin{subtable}[t]{0.45\textwidth}
\centering
\begin{threeparttable}
\begin{tabular}{cccccc}
\toprule
R1 & R2 & R3 & R4 & R5 & R6 \\
\midrule
1 vs 2 & 1 vs 3 & 4 vs 1 & 7 vs 1 & 1 vs 8 & 6 vs 1 \\
4 vs 3 & 2 vs 7 & 2 vs 3 & 8 vs 2 & 7 vs 4 & 2 vs 4 \\
5 vs 6 & 5 vs 4 & 8 vs 5 & 3 vs 5 & 3 vs 6 & 3 vs 8 \\
8 vs 7 & 6 vs 8 & 6 vs 7 & 4 vs 6 & 5 vs 2 & 7 vs 5 \\
\bottomrule
\end{tabular}
\end{threeparttable}
\caption{Timetable}
\label{tab:example2_opp_schedule}
\end{subtable}
\end{table}

Therefore, we use a new neighborhood structure to modify the HAPs, which we call \emph{2-Team HomeAwaySwap} (2THAS). This neighborhood is connected (see further) and does not require an explicit enumeration of HAPs. First, we define an assignment of HAPs to teams $m$ to be \emph{proper} if and only if, for every round $s \in R$, it holds that $|\{i \in T: m(i,s) = \text{H}\}| = |\{i \in T: m(i,s) = \text{A}\}| = \frac{n}{2}$. Furthermore, we call $m$ balanced if and only if for every team $i \in T$, it holds that $|\{s \in R: m(i,s) = \text{H}\}| = |\{s \in R: m(i,s) = \text{A}\}| = \frac{r}{2}$. Then, 2THAS takes as input 2 distinct teams $i,j \in T$ and 2 distinct rounds $q,s \in R$ such that $m(i,s) \neq m(j,s)$, $m(i,s) \neq m(i,q)$ and $m(i,q) \neq m(j,q)$. A 2THAS move involves swapping $m(i,s)$ for $m(j,s)$ and $m(i,q)$ for $m(j,q)$. An example of a sequence of moves in 2THAS is shown in \Cref{tab:2THAS}, where home-away swaps belonging to the same move are denoted by the same color.

\begin{table}[h!]
    \small
    \setlength{\tabcolsep}{5pt}
\caption{Transformation of $m$ to $m^{\star}$ and $m'$ to $m^{\star}$ with 2THAS moves}
\label{tab:2THAS}
\begin{subtable}[t]{0.32\textwidth}
    \centering
\begin{tabular}{c|cccc}
\toprule
& R1 & R2 & R3 & R4 \\
\midrule
1 & \typeD{H} & \typeA{A} & \typeA{H} & \typeD{A} \\
2 & \typeB{A} & \typeD{H} & \typeB{H} & \typeD{A} \\
3 & \typeD{H} & \typeC{A} & \typeD{A} & \typeC{H} \\
4 & \typeB{H} & \typeD{A} & \typeB{A} & \typeD{H} \\
5 & \typeD{A} & \typeC{H} & \typeD{H} & \typeC{A} \\
6 & \typeD{A} & \typeA{H} & \typeA{A} & \typeD{H} \\
\bottomrule
\end{tabular}
\caption{HAP assignment $m$}
\label{tab:2THAS1}
\end{subtable}
\begin{subtable}[t]{0.32\textwidth}
\centering
\begin{threeparttable}
\begin{tabular}{c|cccc}
\toprule
& R1 & R2 & R3 & R4 \\
\midrule
1 & \typeD{H} & \typeD{H} & \typeD{A} & \typeD{A} \\
2 & \typeD{H} & \typeD{H} & \typeD{A} & \typeD{A} \\
3 & \typeD{H} & \typeD{H} & \typeD{A} & \typeD{A} \\
4 & \typeD{A} & \typeD{A} & \typeD{H} & \typeD{H} \\
5 & \typeD{A} & \typeD{A} & \typeD{H} & \typeD{H} \\
6 & \typeD{A} & \typeD{A} & \typeD{H} & \typeD{H} \\
\bottomrule
\end{tabular}
\end{threeparttable}
\caption{HAP assignment $m^{\star}$}
\label{tab:2THAS2}
\end{subtable}
\begin{subtable}[t]{0.32\textwidth}
\centering
\begin{threeparttable}
\begin{tabular}{c|cccc}
\toprule
& R1 & R2 & R3 & R4 \\
\midrule
1 & \typeD{H} & \typeD{H} & \typeD{A} & \typeD{A} \\
2 & \typeD{H} & \typeD{H} & \typeD{A} & \typeD{A} \\
3 & \typeA{A} & \typeC{A} & \typeA{H} & \typeC{H} \\
4 & \typeA{H} & \typeD{A} & \typeA{A} & \typeD{H} \\
5 & \typeD{A} & \typeD{A} & \typeD{H} & \typeD{H} \\
6 & \typeD{A} & \typeC{H} & \typeD{H} & \typeC{A} \\
\bottomrule
\end{tabular}
\end{threeparttable}
\caption{HAP assignment $m'$}
\label{tab:2THAS3}
\end{subtable}
\end{table}

We first argue that if $m$ is proper and balanced, a move in this neighborhood always exists. Indeed, since in any round half the teams play home and half the teams play away, there are exactly $\frac{n^2}{8}$ pairs of teams $\{i,j\}$ in any round $q \in R$ such that $m(i,q) \neq m(j,q)$. Moreover, since each HAP is balanced, it holds that for any such pair there exists a round $s \in R: s \neq q$ such that $m(i,s) \neq m(i,q)  \text{and} \ m(i,s) \neq m(j,s)$. Next, we show that 2THAS is able to transform any proper balanced assignment of HAPs to teams $m$ to any other proper balanced assignment of HAPs to teams $m'$ (i.e., 2THAS is `connected').

\begin{theorem}
    Any proper balanced assignment of HAPs to teams $m$ can be transformed into any other proper balanced assignment of HAPS to teams $m'$ by a finite sequence of moves in the neighborhood structure 2THAS.
\end{theorem}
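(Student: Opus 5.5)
The plan is to recognise the statement as Ryser's interchange theorem for $0$--$1$ matrices with prescribed row and column sums, and to prove it by a monotone decreasing argument. Encode an assignment $m$ as a $n\times r$ matrix with entries in $\{\text{H},\text{A}\}$. Properness says every column has exactly $\frac{n}{2}$ H's, and balancedness says every row has exactly $\frac{r}{2}$ H's. So all proper balanced assignments share the same row and column sums. A 2THAS move on teams $i,j$ and rounds $q,s$ acts on the $2\times 2$ submatrix with rows $i,j$ and columns $q,s$. The three conditions in the definition force that submatrix to have the form $\left(\begin{smallmatrix} a & b\\ b & a\end{smallmatrix}\right)$ with $a\neq b$, and the move replaces it by $\left(\begin{smallmatrix} b & a\\ a & b\end{smallmatrix}\right)$. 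This is exactly a Ryser interchange, and it visibly preserves all row and column sums, hence properness and balancedness. Since moves are involutions, it suffices to show that $m$ can be moved closer to $m'$.

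Define $D(m,m')=\{(t,u)\in T\times R : m(t,u)\neq m'(t,u)\}$; I will show that whenever $D\neq\emptyset$, a finite sequence of moves strictly decreases $|D|$. Encode $D$ as a bipartite graph $B$ on team-vertices and round-vertices. Put an edge $(t,u)$ coloured ``$+$'' if $m(t,u)=\text{H}\neq m'(t,u)$ and ``$-$'' if $m(t,u)=\text{A}\neq m'(t,u)$. Equal row sums of $m$ and $m'$ give, at each team-vertex, equally many $+$ and $-$ edges; equal column sums give the same at each round-vertex. Hence $B$ decomposes into closed trails alternating between $+$ and $-$. Choose an alternating cycle $C=(t_1,u_1,t_2,u_2,\dots,t_k,u_k)$ in $B$ of minimum length $2k$. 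Orient it so that $m(t_\ell,u_\ell)=\text{H}$ and $m(t_{\ell+1},u_\ell)=\text{A}$, indices modulo $k$. The proof then goes by induction on $k$.

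If $k=2$, the submatrix on rows $t_1,t_2$ and columns $u_1,u_2$ is exactly an interchangeable $2\times 2$ pattern. The interchange fixes all four cells, so $|D|$ drops by $4$.

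For $k\ge 3$, look at the chord cell $(t_1,u_2)$, which need not lie in $D$.
\begin{itemize}
\item If $m(t_1,u_2)=\text{A}$, then rows $t_1,t_2$ and columns $u_1,u_2$ read $\left(\begin{smallmatrix}\text{H}&\text{A}\\ \text{A}&\text{H}\end{smallmatrix}\right)$, so a move is available. It corrects $(t_1,u_1)$, $(t_2,u_1)$ and $(t_2,u_2)$, and changes only the chord cell otherwise. The net effect is $|D|$ decreasing by at least $2$.
\item If $m(t_1,u_2)=\text{H}$, this cell is either a $+$ edge of $B$ or agrees with $m'$. In the first case it is a chord of $C$ that creates a strictly shorter alternating cycle, contradicting minimality. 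In the second case, performing an interchange on the remaining part of the cycle, or equivalently replacing the path $t_1,u_1,t_2,u_2$ by the chord, yields a shorter alternating structure. Induction on $k$ applies.
\end{itemize}

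The main obstacle is making this last case airtight. A chord that agrees with $m'$ is not an edge of $B$, so the ``shorter cycle'' is really a cycle in an auxiliary graph where one non-discrepancy cell is used. Performing the interchange there temporarily increases $|D|$ before later moves decrease it.

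To handle this, I would first try the standard Ryser bookkeeping. Use the potential $|D|$ together with the length of the shortest alternating cycle as a lexicographic measure, and show that a sequence of at most $k-1$ interchanges along the cycle eliminates all $2k$ cycle cells while restoring every chord cell it touched. This gives a net decrease in $|D|$ of at least $2$.

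Finiteness then follows because $|D|\le nr$. Composing the sequence from $m$ with the reversed sequence from $m'$ (each move is its own inverse) also gives connectivity in both directions, as in the transformation of $m$ and $m'$ to the common $m^\star$ in \Cref{tab:2THAS}.
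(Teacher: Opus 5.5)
Your translation of a 2THAS move into a Ryser interchange on a $2\times 2$ submatrix $\left(\begin{smallmatrix} a & b\\ b & a\end{smallmatrix}\right)$ is correct. So are the alternating decomposition of the discrepancy graph $B$ and your cases $k=2$, $m(t_1,u_2)=\text{A}$, and $(t_1,u_2)$ a $+$ edge of $B$. This descent on $|D(m,m')|$ is a genuinely different route from the paper's. The paper instead drives every proper balanced assignment, round by round, to the canonical $m^\star$, and then appends the reversed sequence for $m'$.

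However, the case you flag, $k\ge 3$ with $m(t_1,u_2)=m'(t_1,u_2)=\text{H}$, is the heart of the matter and is not proved. Your induction runs over (minimal) alternating cycles of $B$, i.e.\ cycles of discrepancy cells. The shortened cycle $t_1,u_2,t_3,u_3,\dots,t_k,u_k$ uses the cell $(t_1,u_2)\notin D$, so the hypothesis does not apply to it. The lexicographic potential you propose is named but never shown to decrease. Nor can this case be repaired by finding one improving move. For $n=r=4$, take $m$ with rows HHAA, AHHA, AAHH, HAAH and $m'$ with rows AHAH, HAHA, AHAH, HAHA. Both are proper and balanced, and $D$ is a single alternating $8$-cycle whose chord $(t_1,u_2)$ is H in both. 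Every interchange available in $m$ contains exactly two cells of $D$, so every single move leaves $|D|=8$.

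What is missing is the lemma you allude to, formulated for $m$ alone, with no reference to $m'$ and no minimality. The lemma: if $t_1,\dots,t_k$ are distinct teams and $u_1,\dots,u_k$ distinct rounds with $m(t_\ell,u_\ell)=\text{H}$ and $m(t_{\ell+1},u_\ell)=\text{A}$ (indices mod $k$), then some sequence of $k-1$ 2THAS moves flips exactly these $2k$ cells and leaves every other cell unchanged. Prove it by induction on $k$; for $k=2$ it is one move. For $k\ge 3$:
\begin{itemize}
\item If $m(t_1,u_2)=\text{A}$, first make the move on teams $t_1,t_2$ and rounds $u_1,u_2$. It flips three cycle cells and turns the chord into H, so $t_1,u_2,t_3,u_3,\dots,t_k,u_k$ is now an alternating cycle of length $2(k-1)$. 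Flipping it by induction restores the chord.
\item If $m(t_1,u_2)=\text{H}$, reverse the order. First flip that shorter cycle by induction (it already alternates). This turns the chord into A with no net change on $(t_1,u_1)$, $(t_2,u_1)$, $(t_2,u_2)$. The move on $t_1,t_2,u_1,u_2$ is then available and restores the chord.
\end{itemize}
Applying the lemma to any alternating cycle of $B$ lowers $|D|$ by exactly $2k$, which completes your argument and makes the minimality and chord case analysis unnecessary. Once completed, your route gives a direct sequence of fewer than $|D(m,m')|/2$ moves from $m$ to $m'$. The paper's detour through $m^\star$ is elementary but may use many moves even when $m$ and $m'$ are close.
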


\begin{proof}
    For ease of notation, we partition the set of teams $T$ into $T_1 = \{1,\dots,\frac{n}{2}\}, \ T_2 = \{\frac{n}{2}+1,\dots,n\}$ and the set of rounds $R$ into $R_1 = \{1,\dots,\frac{r}{2}\}, \ R_2 = \{\frac{r}{2}+1,\dots,r\}$. Without loss of generality, we can assume that $m(i,1) = \text{H}$ for all $i \in T_1$ and $m(j,1) = \text{A}$ for all $j \in T_2$. In order to prove the theorem, we show that starting from any proper balanced assignment of HAPs to teams $m$, we can transform $m$ into $m^{\star}$, which is defined as follows (see also \cref{tab:2THAS2}):
    
    \begin{align*}
    m^{\star}(i,s) =&
    \begin{cases}
        \text{H} & \text{if} \ i \in T_1 \ \text{and} \ s \in R_1 \\
        \text{A} & \text{if} \ i \in T_1 \ \text{and} \ s \in R_2 \\
        \text{A} & \text{if} \ i \in T_2 \ \text{and} \ s \in R_1 \\
        \text{H} & \text{if} \ i \in T_2 \ \text{and} \ s \in R_2 \\
    \end{cases}
    & \forall i \in T, \ \forall s \in R
    \end{align*}  

    Consider the smallest value for $k < \frac{r}{2}$, such that $m(i,k) = \text{H}$ and $m(j,k) = \text{A}$ for all $i \in T_1, \ j \in T_2$, and that there exists a team $u \in T_1$ such that $m(u,k+1) = \text{A}$. Because $m$ is proper, there must exist a team $v \in T_2$ such that $m(v,k+1) = \text{H}$. Since $m$ is balanced, there necessarily must exist two rounds $q,s > k+1$ such that $m(u,q) = \text{H}$ and $m(v,s) = \text{A}$. In case that $s=q$, we can perform a 2THAS move that results in $m(u,k+1) = m(v,q) = \text{H}$ and $m(v,k+1) = m(u,q) = \text{A}$. 
    Now suppose that $s \neq q$. In case that $m(v,q) = \text{A}$, we can do a 2THAS move that results in $m(u,k+1) = m(v,q) = \text{H}$ and $m(v,k+1) = m(u,q) = \text{A}$. 
    Similarly, if $m(u,s) = \text{H}$, we can do a 2THAS move that results in $m(u,k+1) = m(v,s) = \text{H}$ and $m(v,k+1) = m(u,s) = \text{A}$. This leaves the case where $m(v,q) = \text{H}$ and $m(u,s) = \text{A}$. Since $m$ is proper, exactly $\frac{n}{2}$ teams from the set $T \setminus \{u,v\}$ must play away in round $q$, and since $u$ and $v$ play away in round $s$, there are exactly two teams in the set $T \setminus \{u,v\}$ that play away in round $q$ and home in round $s$. Arbitrarily label one of these teams $w$. Then, we first do the 2THAS move including teams $v$ and $w$ and rounds $s$ and $q$, which results in $m(v,q) = \text{A}$, so we can now do a 2THAS move involving teams $u$ and $v$ and rounds $k+1$ and $q$. We can iteratively do this until $m$ and $m^{\star}$ are identical for round $k+1$, after which we proceed with the next round, until $k=\frac{r}{2}$. An example is shown in \Cref{tab:2THAS1}, where the HAP assignment in \Cref{tab:2THAS1} is transformed to $m^{\star}$ by the blue, green an purple 2THAS moves. 

    If for each round $k \leq \frac{r}{2}$, it holds that $m(i,k) = \text{H}$ and $m(j,k) = \text{A}$ for all $i \in T_1, \ j \in T_2$, it follows from the balancedness of the HAPs that $m(i,s) = \text{A}$ and $m(j,s) = \text{H}$ for all $i \in T_1, \ j \in T_2$ and $s > \frac{r}{2}$. Consequently, $m=m^{\star}$. 
    
    Since any two proper balanced HAP assignments $m$ and $m'$ can be transformed into $m^{\star}$, it follows that we can first transform $m$ into $m^{\star}$, and then transform $m^{\star}$ into $m'$, and visa versa. For example, in \Cref{tab:2THAS} $m$ can be transformed into $m^{'}$ by first applying the sequence of 2THAS moves shown in \Cref{tab:2THAS1}, and consequently into $m'$ by applying the moves needed to transform $m'$ into $m^{\star}$ (shown in \Cref{tab:2THAS3}), in opposite order.
\end{proof}

Our version of the iterative greedy matching algorithm works as follows. First, we construct a set of HAPs that serves as a starting solution. In case that $r \leq \frac{n}{2}$, we randomly generate pairs of complementary HAPs with at most $\lambda$ consecutive home and away games, and randomly assign these HAPs to the teams. At first sight, \Cref{thm:travel_distance_CON} suggests that taking complementary HAPs with a maximum number of breaks might be beneficial. However, we obtained better results in our experiments by sampling arbitrary pairs of complementary HAPs. From \Cref{thm:Hall}, we know that greedily taking perfect matchings that respect the HAPs always result in a feasible timetable. In case that $r > \frac{n}{2}$, we follow the approach outlined in \Cref{sec:iTTP} to obtain an intial set of HAPs. While greedily taking perfect matchings is not guaranteed to result in a timetable, at least we know that this HAP set allows one. 

With this HAP, we start the iterative greedy matching algorithm. In each iteration of greedy matching, we modify the HAPs with 2THAS, and redo the greedy matching. If 2THAS results in a HAP with more than $\lambda$ consecutive home or away games, we reject the move and sample new moves until a feasible move has been found. We use hill climbing to navigate through the search space. Hence, a timetable is accepted only if it yields at least the same objective value as the best found solution so far. If the move is not accepted, we undo the HAP modification, and sample a new move. The algorithm continues until we reach 10,000 consecutive iterations without finding a solution that is at least as good as the current best solution. Experiments revealed that allowing more consecutive iterations without improvement did not change the results significantly.

\section{Computational experiments}\label{sec:Results}

We now perform some computational experiments, where we compute lower and upper bounds for iTTP instances. We first discuss how we generate instances, and after briefly discussing the implementation details, we present the results.

\subsection{Instances}

Observe that a TTP instance can readily be transformed to an iTTP instance by specifying an additional parameter $r$, which denotes the number of rounds. Therefore, we take a set of TTP instances available at \url{https://robinxval.ugent.be/robinx/travelRepo.php}. It is known that they are computationally challenging to solve, as evidenced by the fact that none of the instances with more than 10 teams have been solved except for CON, for which an optimal solution with 16 teams is known. As is common in the TTP literature, we specify $\lambda=3$.
The instances and their main characteristics are given in \Cref{tab:instancesTTP}. We refer to an iTTP instance by the instance class it belongs to (its competition name or topological structure) and the number of teams and rounds.
In particular, the NL (\cite{easton2001traveling}), NFL (\cite{uthus2009dfs}), and BRA (\cite{ribeiro2007heuristics}) instances are based on the National League in Major League Baseball and the National Football League and Brazilian football competition, respectively.
In GAL (\cite{uthus2012solving}), instances place artificial teams in a 3D-coordinate space and the distance between them is based on the number of light years between stars in the universe.
Further, in CIRC teams are placed on a circle, while in LINE and INCR \citep{hoshino2012generating} instances teams are located on a straight line. Finally, in CON instances \citep{urrutia2006maximizing} the travel distance between every pair of teams is one (i.e., their objective is minimizing the number of legs). For an instance with $n$ teams, we generate three instances with $r \in \{\frac{n}{4}, \frac{n}{2}, \frac{3n}{4}\}$. Since $n \in \{16,24,32,40\}$, this always results in an even number of rounds.

\begin{table}[ht!]
  \centering
  \small
    \setlength{\tabcolsep}{5pt}
  \caption{Overview of the iTTP instances and their characteristics}
  \label{tab:instancesTTP}
  \begin{threeparttable}
  \begin{tabular}{lcr p{9cm}}
    \toprule
    Setting & No. teams & No.rounds & Characteristics \\
    \midrule
    NL & 16 & 4,8,12 & National League of Major League Baseball \\
    BRA & 24 & 6,12,18 & Brazilian soccer championship \\
    NFL & 32 & 8,16,24 & National Football League \\
    GAL & 40 & 10,20,30 & Teams are located in a three-dimensional `galaxy' \\
    CIRC & 40 & 10,20,30 & Teams are located on a circle \\
    CON & 40 & 10,20,30 & All distances are 1\\
    LINE & 40 & 10,20,30 & Teams are located on a line\\
    INCR & 40 & 10,20,30 & Teams are located on a line with increasing distances\\
    \bottomrule
  \end{tabular}
  \end{threeparttable}
\end{table}

\subsection{Implementation details}

All experiments were run on a GNU/Linux based system with an AMD EPYC 7532 32-Core Processor running at 3.3GHz, provided with 8 threads and 64GB of RAM. All integer programming formulations and their LP-relaxations are solved with Gurobi 12.0.

Lower bounds for CON instances where derived by the formula of \Cref{thm:travel_distance_CON} in case $r \leq \frac{n}{2}$ and by Benders decomposition otherwise. 
To compute the ILB, we used formulation \eqref{lb2:obj}-\eqref{lb2:c6} except that we formulated it only for a single team, enforcing that the total length of all the road trips is equal to $\frac{r}{2}$ and that each opponent is present in at most one selected trip (rather than solving the PCVRP, see \Cref{subsec:ilb}). 
Solutions for the ILB can be found almost instantly. Finding solutions for the DLB is more challenging, especially for larger instances. The computation time of the DLB, DLB-1F and DLB-MinLeg is limited to 48 hours. For the upper bounds, the computation time of solving formulations $\mathcal{F}_1$ and $\mathcal{F}_2$ is also limited to 48 hours, while the computation time of the iterative greedy matching algorithm and $\mathcal{F}_2$-HAP is limited to 12 hours. 

The greedy matching heuristic was implemented in C++ and minimum weight bipartite perfect matchings were found with the Boost Maximum Weighted Matching library (\url{https://www.boost.org/doc/libs/latest/libs/graph/doc/maximum_weighted_matching.html}), which runs in $\mathcal{O}(n^3)$ time. Every time we solve an instance with the greedy matching algorithm, we do so with 100 different seeds. In case that $r > \frac{n}{2}$, we find an initial timetable with Vizing's edge coloring algorithm as explained in \Cref{sec:iTTP}. For this, we use the Boost Edge Coloring library (\url{https://www.boost.org/doc/libs/latest/libs/graph/doc/edge_coloring.html}). Since this algorithm is deterministic, in order to obtain different timetables we randomly shuffle the teams based on the seed.

\subsection{Results}

In this section, we present lower and upper bounds for the instances shown in \Cref{tab:instancesTTP}. \Cref{tab:DiLB} shows the lower bounds. For each instance, we provide the LP-relaxation of formulations $\mathcal{F}_1$ (LP$_{\mathcal{F}_1}$, \Cref{sec:UpperBounds_IP}) and $\mathcal{F}_2$ (LP$_{\mathcal{F}_2}$, \Cref{sec:HAPdriven}), the Independent Lower Bound (ILB, \Cref{subsec:ilb}), the Dependent Lower Bound (DLB, \Cref{subsec:dlb}), the DLB with 1-factorable constraints (DLB-1F, \Cref{sec:coloring_constraints}), and the DLB with the MinLeg constraint (DLB-MinLeg, \Cref{subsec:mnt}). For each instance, the strongest lower bound is marked in bold, while the weakest lower bound is italicized. 

\begin{table}[t]
    \centering
    \small
    \setlength{\tabcolsep}{5pt}
    \caption{Results lower bounds}
    \label{tab:DiLB}
    \begin{threeparttable}
    \begin{tabular}{l rrrrrr}
        \toprule
        Instance & LP$_{\mathcal{F}_1}$ & LP$_{\mathcal{F}_2}$ & ILB & DLB & DLB-MinLeg & DLB-1F \\
        \midrule
        NL16-4 & \emph{9002} & 22696 & 15273 & 23625 & - & \textbf{24464} \\
        NL16-8 & \emph{13598} & 52905 & 36130 & \textbf{57263} & - & \textbf{57263}\\
        NL16-12 & \emph{20760} & 91346 & 55568 & \textbf{92580} & \textbf{92580} & \textbf{92580}\\
        BRA24-6 & \emph{6205.5} & 34982 & 21418 & 38745 & - & \textbf{39127}\\
        BRA24-12 & \emph{6205.5} & 96031 & 51633 & \textbf{98815*} & - & \textbf{98815*}\\
        BRA24-18 & \emph{6205.5} & 165432 & 87123 & \textbf{167657*} & \textbf{167657*} & \textbf{167657*}\\
        NFL32-8 & \emph{10275} & 64912 & 47506 & 70127 & - & \textbf{70158} \\
        NFL32-16 & \emph{19857} & 169263 & 113319 & \textbf{175051*} & - & \textbf{175051*}\\
        NFL32-24 & 36216 & -1 & 191083 & 297178* & \textbf{297305*} &  \textbf{297305*}\\
        CIRC40-10 & \emph{280} & \textbf{560} & 400 & \textbf{560} & - & \textbf{560} \\
        CIRC40-20 & 560 & -1 & 1120 & \textbf{1760} & \textbf{1760} & 1760\\
        CIRC40-30 & 810 & -1 & 2160 & 3600 & \textbf{3620*} & \textbf{3620*}\\
        GAL40-10 & 5261 & -1 & 15546 & \textbf{23794} & - & \textbf{23794} \\
        GAL40-20 & 9915 & -1 & 33055 & \textbf{51992*} & - & \textbf{51992*}\\
        GAL40-30 & 15378.5 & -1 & 52236 & \textbf{81752*} & \textbf{81752*} & \textbf{81752*}\\
        INCR40-10 & 1342.5 & -1 & 8066 & \textbf{13492} & - & \textbf{13492}\\
        INCR40-20 & 2509.5 & -1 & 22892 & \textbf{47222*} & - & \textbf{47222*}\\
        INCR40-30 & 4031.5 & -1 & 45484 & \textbf{102336*} & \textbf{102336*} & \textbf{102336*}\\ 
        LINE40-10 & \emph{283} & 641 & 412 & \textbf{678} & - & \textbf{678}\\
        LINE40-20 & 563 & -1 & 1188 & \textbf{2364} & - & \textbf{2364}\\
        LINE40-30 & 813 & -1 & 2376 & \textbf{5120*} & \textbf{5120*} & \textbf{5120*}\\
        \bottomrule
    \end{tabular}
    \begin{tablenotes}
\item Asterisks (*) denote bounds solved within a 0.1\% optimality gap; all other bounds were solved to proven optimality. For each instance, the best found upper bound is marked in bold, while the worst found upper bound is italicized. A -1 indicates that the model could not be fit into 64GB of memory, or the solver ran out of memory before finding a feasible solution. In - in the column DLB-MinLeg denotes that this bound was not computed.
\end{tablenotes}
\end{threeparttable}
\end{table}

First, we observe that the DLB results in a significant improvement of the bounds compared to the ILB. The DLB-MinLeg improved the DLB only in the case of NFL32-24 and CIRC40-30, so the effect of adding the MinLeg Constraint \eqref{lb2:c8} seems limited. Note that we only compute the DLB-MinLeg for cases where $r > \frac{n}{2}$, since for $r \leq \frac{n}{2}$ this bound cannot improve the DLB (see \Cref{subsec:mnt}). We also observe that adding the 1-factorable Constraints~\eqref{lb2:c9}-\eqref{lb2:c10} improved the lower bound only in the case of NL16-4, BRA24-6 and NFL32-8. In line with the one-factorization conjecture (see \Cref{sec:coloring_constraints}), this suggests that the DLB-1F is only effective in case the number of rounds is relatively small. Next, we turn to the LP-relaxations of $\mathcal{F}_1$ and $\mathcal{F}_2$. As expected, solving the LP-relaxation of $\mathcal{F}_1$ results in a very weak lower bound compared to the other methods, even when Constraint~\eqref{lb2:c8} is added. However, the LP relaxation of $\mathcal{F}_2$ turns out to be relatively strong. Notably, the DLB and its variants still provide tighter bounds than the LP-relaxation of $\mathcal{F}_2$. Moreover, for instances with $n=40$ and $r \in \{20,30\}$, $\mathcal{F}_2$ could either not be fit into 64GB of memory, or the solver ran out of memory before finding a feasible solution. In these cases, column generation is advised for solving the LP-relaxation.

\begin{table}[t]
    \centering
    \small
    \setlength{\tabcolsep}{5pt}
    \caption{Computation times for solution strategies}
    \label{tab:Time}
    \begin{threeparttable}
    \begin{tabular}{l rrrrr}
        \toprule
& $\mathcal{F}_1$ & $\mathcal{F}_2$ & GM-c & GM-it & $\mathcal{F}_2$-HAP \\ 
\midrule
Min & 48.0h & 12.0s & $< 1.0$s & $< 1.0$s & $< 1.0$s \\
Max & 48.0h & 48.0h & $< 1.0$s & 52.5m &  12.0h \\
Average & 48.0h & 36.0h & $< 1.0$s & 9.22m & 6.5h \\
\bottomrule
    \end{tabular}
    \begin{tablenotes}
\item s: seconds, m: minutes, h: hours
\end{tablenotes}
\end{threeparttable}
\end{table}

We now turn to the upper bounds. \Cref{tab:Time} shows the minimum, maximum, and average computation time of each of the proposed methods. Solving formulation $\mathcal{F}_1$ always took the maximum allotted time, even for the smallest instances. While an optimal solution for NL16-4 can found almost instantly by solving $\mathcal{F}_2$, for other instances the solver either ran out of memory or hit the time limit of 48 hours. GM-c found a solution always instantly, while GM-it required up to 52.5 minutes to converge. Finally, for larger instances $\mathcal{F}_2$-HAP is not able to find a proven optimal solution (given a fixed assignment of HAPs to teams) within a time limit of 12 hours, while for smaller instances it is often able to find an optimal solution within a few hours. 

\begin{table}[t]
    \centering
    \small
    \setlength{\tabcolsep}{2pt} 
    \caption{Results upper bounds}
    \label{tab:Results1}
    \begin{threeparttable}
    \begin{tabularx}{\textwidth}{l @{\extracolsep{\fill}} r @{\extracolsep{\fill}} rr @{\extracolsep{\fill}} rr @{\extracolsep{\fill}} rr @{\extracolsep{\fill}} rrr @{\extracolsep{\fill}} rr}
        \toprule
        Instance & LB & \multicolumn{2}{c}{$\mathcal{F}_1$} & \multicolumn{2}{c}{$\mathcal{F}_2$} & \multicolumn{2}{c}{GM-c} & \multicolumn{3}{c}{GM-it} & \multicolumn{2}{c}{$\mathcal{F}_2$-HAP} \\ 
         \cmidrule(lr){3-4} \cmidrule(lr){5-6} \cmidrule(lr){7-8} \cmidrule(lr){9-11} \cmidrule(lr){12-13}
        & & UB & (\%) & UB & (\%) & UB & (\%) & UB & (\%) & Avg UB & UB & (\%) \\
        \midrule
        NL16-4 & 24464 & 25200 & 2.92 &  \textbf{\underline{25076}} & 2.44 & 28815 & 15.1 & \emph{26112} & 6.31 & 28517 & 25876 & 5.46 \\ 
NL16-8 & 57263 & \emph{70448} & 18.72 &  \textbf{62125} & 7.83 & 78027 & 26.61 & 66763 & 14.23 & 73147 & 65068 & 12.0 \\ 
NL16-12 & 92580 & \emph{153583} & 39.72 & \textbf{110037} & 15.86 & 133114 & 30.45 & 120310 & 23.05 & 131165 & 115089 & 19.56 \\ 
BRA24-6 & 39127 & \emph{53361} & 26.67 & \textbf{41004} & 4.58 & 46780 & 16.36 & 43491 & 10.03 & 47214 & 42508 & 7.95 \\ 
BRA24-12 & 98815 & \emph{165017} & 40.12 & -1 & -1 & 151619 & 34.83 & 127438 & 22.46 & 138358 & \textbf{120615} & 18.07 \\ 
BRA24-18 & 167657 & \emph{362904} & 53.8 & -1 & -1 & 290842 & 42.35 & 236738 & 29.18 & 253081 & \textbf{212852} & 21.23 \\ 
NFL32-8 & 70158 & \emph{115763} & 39.4 & -1 & -1 & 98153 & 28.52 & 85201 & 17.66 & 91932 & \textbf{83771} & 16.25 \\ 
NFL32-16 & 175051 & \emph{362382} & 51.69 & -1 & -1 & 274387 & 36.2 & 235752 & 25.75 & 255240 & \textbf{215826} & 18.89 \\ 
NFL32-24 & 297305 & \emph{651580} & 54.37 & -1 & -1 & 542675 & 45.21 & 443518 & 32.97 & 467803 & \textbf{396264} & 24.97 \\ 
CIRC40-10 & 560 & \emph{1812} & 69.09 & -1 & -1 & 1064 & 47.37 & 852 & 34.27 & 929 & \textbf{814} & 31.2 \\ 
CIRC40-20 & 1760 & \emph{640000} & 99.72 & -1 & -1 & 4306 & 59.13 & 2940 & 40.14 & 3278 & \textbf{2770} & 36.46 \\ 
CIRC40-30 & 3620 & \emph{10054} & 63.99 & -1 & -1 & 7472 & 51.55 & 5974 & 39.4 & 6330 & \textbf{5640} & 35.82 \\ 
CON40-10 & 280 & \emph{329} & 14.89 & -1 & -1 & 286 & 2.1 & \textbf{280} & 0.0 & 280 & \textbf{280} & 0.0 \\ 
CON40-20 & 560 & \emph{659} & 15.02 & -1 & -1 & 620 & 9.68 & \textbf{560} & 0.0 & 608 & \textbf{560} & 0.0 \\ 
CON40-30 & 810 & \emph{974} & 16.84 & -1 & -1 & 993 & 18.43 & \textbf{812} & 0.25 & 816 & \textbf{812} & 0.25 \\ 
GAL40-10 & 23794 & \emph{49632} & 52.06 & -1 & -1 & 33353 & 28.66 & 28872 & 17.59 & 30270 & \textbf{28201} & 15.63 \\ 
GAL40-20 & 51992 & \emph{102362} & 49.21 & -1 & -1 & 87529 & 40.6 & 66713 & 22.07 & 72432 & \textbf{63135} & 17.65 \\ 
GAL40-30 & 81752 & \emph{154593} & 47.12 & -1 & -1 & 140553 & 41.84 & 120554 & 32.19 & 124393 & \textbf{112270} & 27.18 \\ 
INCR40-10 & 13492 & \emph{41892} & 67.79 & -1 & -1 & 23526 & 42.65 & 18890 & 28.58 & 20564 & \textbf{17796} & 24.19 \\ 
INCR40-20 & 47222 & \emph{173689} & 72.81 & -1 & -1 & 165864 & 71.53 & 77374 & 38.97 & 88317 & \textbf{67078} & 29.6 \\ 
INCR40-30 & 102336 & \emph{256898} & 60.16 & -1 & -1 & 212798 & 51.91 & 166360 & 38.49 & 175668 & \textbf{142404} & 28.14 \\ 
LINE40-10 & 678 & \emph{3578} & 81.05 & -1 & -1 & 1192 & 43.12 & 936 & 27.56 & 1036 & \textbf{884} & 23.3 \\ 
LINE40-20 & 2364 & \emph{8334} & 71.63 & -1 & -1 & 6458 & 63.39 & 3908 & 39.51 & 4369 & \textbf{3370} & 29.85 \\ 
LINE40-30 & 5120 & \emph{13432} & 61.88 & -1 & -1 & 10904 & 53.04 & 8244 & 37.89 & 8796 & \textbf{6990} & 26.75 \\ 
\midrule
Average & & & 48.78 & & 7.68 & & 37.53 & & 24.11 & & & 19.60 \\
\bottomrule
    \end{tabularx}
    \begin{tablenotes}
For each instance, the best found upper bound is marked in bold, while the worst found upper bound is italicized. The value for NL16-4 found by $\mathcal{F}_2$ is optimal. 
\end{tablenotes}
    \end{threeparttable}
\end{table}

Finally, \Cref{tab:Results1} shows the results. The first column repeats the best found lower bound (LB) from \Cref{tab:DiLB}. Next, it shows the best found upper bound, together with the gap, found by solving $\mathcal{F}_1, \mathcal{F}_2$, one iteration of constructive greedy matching (GM-c), iterative greedy matching (GM-it) and $\mathcal{F}_2$-HAP, respectively, for each instance. The average gap achieved by each method over all the instances (for which it could find a solution) is also reported. For GM-c and GM-it, the best upper bound over the 100 different seeds is reported. Moreover, for GM-it, we also show the average upper bound. From \Cref{tab:Results1}, we can see that the solutions found by $\mathcal{F}_1$ are relatively poor, even for small instances like NL16-8 and BRA24-6. The solutions found for instances of settings CIRC, INCR and LINE are observed to be particularly poor, which is most likely due to the high amount of symmetry in these instances. For most instances, formulation $\mathcal{F}_2$, which contains more than 68 million variables, could not be fit into memory. However, for NL16-4 it is able to obtain an optimal solution in just a few seconds, and for instances NL16-8, NL16-12 and BRA24-6, it finds the best solution (within 48 hours) among all methods. Notably, our strongest lower bound for these three instances is stronger than the lower bound that GUROBI found after solving $\mathcal{F}_2$ for 48 hours.

Concerning the greedy matching heuristic, modifying the HAPs seem to be the preferred strategy, as the best found solution of GM-it is often around 10\% better than the best found solution by GM-c. Moreover, $\mathcal{F}_2$-HAP is able to further improve upon the best found solution by GM-it, achieving gaps between 0\%-36\%. Nevertheless, the gaps reported in \Cref{tab:Results1} are still relatively high, even for instances with a small number of teams and/or a limited number of rounds. This suggests that both the construction of better lower and upper bounds, respectively, are promising directions for future research. 

\section{Conclusion}\label{sec:Conclusion}

In this paper, we introduce the incomplete Traveling Tournament Problem (iTTP), which extends the well-known Traveling Tournament Problem to the context of incomplete round-robin tournaments.
We gave a formal problem description, proposed problem instances, and provided several methods to obtain lower bounds and solutions inspired by existing formulations and algorithms. Although the problem appears similar to the traditional TTP, the additional requirement of determining which teams will play against each other introduces an additional challenge. Our works suggests that approaches based solely on either TTP or iRR research struggle to efficiently solve iTTP problems even small-sized problem instances to optimality. However, by combining ideas from both worlds, we managed to obtain some promising first results. We hope that our contribution will trigger further research from the academic community to devise stronger bounds and more efficient solution strategies.

\bibliographystyle{apalike}
\bibliography{bibliography}

\vspace*{-10pt}

\section*{Appendix A}\label{App:B}

Here, we prove \Cref{thm:LP}, which states that the LP-relaxation of formulation $\mathcal{F}_{2}$ is strictly stronger than that formulation $\mathcal{F}_{1}$.

\begin{proof}

Let LP$_{\mathcal{F}}(I)$ be the optimal objective value found by solving the LP-relaxation of a formulation $\mathcal{F}$ for instance $I$. For a minimization problem, the LP-relaxation of formulation $\mathcal{F}_{2}$ is said to be strictly stronger than that of another formulation $\mathcal{F}_{1}$ if and only if the following conditions hold:

\begin{enumerate}
    \item for each instance $I$, LP$_{\mathcal{F}_2}(I)$ $\geq$ LP$_{\mathcal{F}_1}(I)$,
    \item there exists an instance $I$ such that LP$_{\mathcal{F}_2}(I)$ $>$ LP$_{\mathcal{F}_1}(I)$.
\end{enumerate}

From \Cref{tab:DiLB}, we can see that the second condition is satisfied for all instances considered in this paper. In order to prove the first condition, we need to show that any solution of the LP-relaxation of $\mathcal{F}_2$ corresponds to a feasible solution of the LP-relaxation of $\mathcal{F}_1$ with the same objective function value. Given a solution for solution of the LP-relaxation of $\mathcal{F}_2$, we set the variables in the LP-relaxation of $\mathcal{F}_1$ as follows: 

\begin{align}\label{eq:setx}
& x_{its} = \sum_{(p,q) \in V_{tis}}z_{tpq} & \forall (i,t) \in M, s \in R
\end{align}

In order to set the $y$-variables, we let $L_{tp}$ be the set of legs that constitute road trip $p$ for team $t$, i.e.\ $L_{tp} = \{(t,p_1), (p_{\delta_{tp}},t)\} \cup \{(p_i, p_{i+1}): i \in \{1,\dots,\delta_{pt}-1\}\}$. We set 

\begin{align}\label{eq:sety}
& y_{tij} = \sum_{p \in P_t: (i,j) \in L_{tp}}\sum_{s \in R}z_{tps} & \forall i,j,t \in T
\end{align}

We now show that this solution satisfies all the constraints from $\mathcal{F}_1$. First, observe that the following equalities holds:

\begin{align}\label{eq:helper1}
   & \sum_{(p,q) \in A_{ts}}z_{tpq} = \sum_{i \in T \setminus \{t\}}\sum_{(p,q) \in V_{tis}}z_{tpq} & \forall t \in T, s \in R
\end{align}

Equality~\eqref{eq:helper1} follows from the fact that if $t$ has an active trip in round $s$, then by the definition of the sets $V$, there is exactly one team that is visited by team $t$ in round $s$. 

Because of Constraint~\eqref{IP-Trips:c5}, the values of the $x$-variables are bounded by $[0,1]$. Because of Constraints~\eqref{IP-Trips:c2} and \eqref{IP-Trips:c7}, the values of the $y$-variables are bounded by $[0,1]$. This settles the domain constraints (\ref{IP:c12}) and (\ref{IP:c13}).

We proceed with the constraints that contain only $x$-variables. Constraints~\eqref{IP:c1} are satisfied since for any pair $i,j \in T: i < j$:

\begin{align}
    & \sum_{s \in R}(x_{ijs}+x_{jis}) = \sum_{s \in R} \bigl( \sum_{(p,q) \in V_{jis}}z_{jpq} + \sum_{(p,q) \in V_{ijs}}z_{ipq} \bigr) \stackrel{\eqref{IP-Trips:c2}}{\leq} 1 
\end{align}

Constraints~\eqref{IP:c2} are satisfied since for any $i \in T, \ s \in R$:

\begin{align}
    \sum_{j \in T \setminus \{i\}}(x_{ijs}+x_{jis}) =& \sum_{j \in T \setminus \{i\}} \bigl( \sum_{(p,q) \in V_{jis}}z_{jpq} + \sum_{(p,q) \in V_{ijs}}z_{ipq} \bigr) \\
    \stackrel{\eqref{IP-Trips:c5}}{=}& 1 - \sum_{(p,q) \in A_{is}}z_{ipq} + \sum_{j \in T \setminus \{i\}}\sum_{(p,q) \in V_{ijs}}z_{ipq} \\
    \stackrel{\eqref{eq:helper1}}{=}& 1 - \sum_{j \in T \setminus \{i\}}\sum_{(p,q) \in V_{ijs}}z_{ipq} + \sum_{j \in T \setminus \{i\}}\sum_{(p,q) \in V_{ijs}}z_{ipq} = 1
\end{align}

Next, Constraints~\eqref{IP:c3} are also satisfied since for any $i \in T$:

\begin{align}
    \sum_{j \in T \setminus \{i\}}\sum_{s \in R}x_{ijs} = \sum_{j \in T \setminus \{i\}}\sum_{s \in R}\sum_{(p,q) \in V_{jis}}z_{jpq} = \sum_{p \in P_t}\sum_{s \in R_{\tau_{tp}}}\delta_{tp}z_{tps} \stackrel{\eqref{IP-Trips:c1}}{=} \frac{r}{2}
\end{align}

This follows from the fact that any value for $z_{tps}$ is present $\delta_{tp}$ times in the sum $\sum_{j \in T \setminus \{i\}}\sum_{s \in R}\sum_{(p,q) \in V_{jis}}z_{jpq}$ (once for each team $j$ visited by team $i$ in its road trip $p$).
\vspace{2pt}
Constraints~\eqref{IP:c9} are satisfied as well for any $i \in T, s \in R_{r-\lambda}$:

\begin{align}
    \sum_{f = s}^{s+\lambda}\sum_{j \in T \setminus \{i\}}x_{jif} = \sum_{f = s}^{s+\lambda}\sum_{j \in T \setminus \{i\}}\sum_{(p,q) \in V_{ijf}}z_{ipq} 
    \stackrel{\eqref{IP-Trips:c6}}{\leq} \lambda
\end{align}

Constraints~\eqref{IP:c10} also hold, since for any $i \in T, s \in R_{r-\lambda}$:

\begin{align}
    \sum_{f = s}^{s+\lambda}\sum_{j \in T}x_{jif} & = \sum_{f = s}^{s+\lambda} \bigl( \sum_{j \in T \setminus \{i\}}\sum_{(p,q) \in V_{ijf}}z_{ipq} \bigr) \\
    & \stackrel{\eqref{eq:helper1}}{=} \sum_{f =s}^{s+\lambda} \sum_{(p,q) \in A_{if}}z_{ipq} \stackrel{\eqref{IP-Trips:c5}}{=} \sum_{f =s}^{s+\lambda}\bigl(1 - \sum_{j \in T \setminus \{i\}}\sum_{(p,q) \in V_{jif}}z_{jpq} \bigr) \\
    & \stackrel{\eqref{IP-Trips:c6}}{\geq} \lambda+1-\lambda = 1
\end{align}

We continue by showing that the constructed $x$ and $y$ variables satisfy Constraints~\eqref{IP:c4}-\eqref{IP:c8}. Constraints~\eqref{IP:c4} are satisfied since for all $i,j,t \in T$ and any $s \in R \setminus \{1\}$ it holds that:

\begin{align}
    x_{its-1} + x_{jts}-1 = \sum_{(p,q) \in V_{tis-1}}z_{tpq} + \sum_{(p,q) \in V_{tjs}}z_{tpq}-1 \leq \max \{0, \sum_{p \in P_t: (i,j) \in L_{tp}}\sum_{s \in R}z_{tps}\} \leq y_{tij}
\end{align}

Next, the constructed solution satisfies Constraints~\eqref{IP:c5} since:

\begin{align}
    x_{its-1} + \sum_{j \in T \setminus \{i\}}x_{tjs}-1 & = \sum_{(p,q) \in V_{tis-1}}z_{tpq} + \sum_{j \in T \setminus \{i\}}\sum_{(p,q) \in V_{jts}}z_{jpq}-1 \\
    & \leq \max \{0, \sum_{p \in P_t: (i,t) \in L_{tp}}\sum_{s \in R}z_{tps}\} \leq y_{tit}
\end{align}

By a similar argument, Constraints~\eqref{IP:c6} are also satisfied. Constraints~\eqref{IP:c7} are satisfied for all $t,i \in T$ since:

\begin{align}
    x_{it1} = \sum_{(p,q) \in V_{ti1}}z_{tpq} \leq \sum_{s \in R}\sum_{p \in P_t: (t,i) \in L_{pt}}z_{tps} = y_{tit}
\end{align}

Similarly, Constraints~\eqref{IP:c8} are also seen to be satisfied for all $t,i \in T$:

\begin{align}
    x_{itr} = \sum_{(p,q) \in V_{tir}}z_{tpq} \leq \sum_{s \in R}\sum_{p \in P_t: (i,t) \in L_{pt}}z_{tps} = y_{tti}
\end{align}


Finally, the objective function value for the $x$ and $y$ variables constructed via \Cref{eq:setx} and \Cref{eq:sety} is equal to that of the original solution of the LP-relaxation of formulation $\mathcal{F}_{2}$:

\begin{align}
    \sum_{t \in T}\sum_{i \in T}\sum_{j \in T}d_{ij}y_{tij} &= \sum_{t \in T}\sum_{i \in T}\sum_{j \in T}d_{ij}\bigl(\sum_{p \in P_t: (i,j) \in L_{tp}}\sum_{s \in R}z_{tps}\bigr) \\
    & = \sum_{t \in T}\sum_{p \in P_t}\sum_{s \in R}\bigl( \sum_{(i,j) \in L_{tp}}d_{ij} \bigr)z_{tps} = \sum_{t \in T}\sum_{p \in P_t}\sum_{s \in R}c_{tp}z_{tps}
\end{align}


\end{proof}

\end{document}